\theoremstyle{plain}
\numberwithin{equation}{section}
\newtheorem{theorem}{Theorem}[section]
\newtheorem{proposition}[theorem]{Proposition}
\newtheorem{lemma}[theorem]{Lemma}
\newtheorem{remark}[theorem]{Remark}
\newenvironment{proofad1}{\removelastskip\par\medskip
\noindent{\textbf {Proof of Theorem \ref{maintheorem}}.}
\rm}{\penalty-20\null\hfill$\blacksquare$\par\medbreak} 
\newenvironment{proofad2}{\removelastskip\par\medskip
\noindent{\textbf {Proof of Lemma \ref{appendixlemma}}.}
\rm}{\penalty-20\null\hfill$\blacksquare$\par\medbreak} 
\definecolor{darkred}{rgb}{0.8,0,0}
\definecolor{darkblue}{rgb}{0,0,0.7}
\definecolor{darkgreen}{rgb}{0,0.4,0}
\newcommand{\MMM}{\color{black}}
\newcommand{\KKK}{\color{black}}
\newcommand{\PPP}{\color{black}}
\newcommand{\eps}{\varepsilon}
\newcommand{\R}{{\mathbb R}}
\newcommand{\un}{{\rm 1\kern -2.5pt l}}
\def\eps{\varepsilon}
\def\R{{\mathbb R}}
\def\eps{\varepsilon}
\def\R{{\mathbb R}}
\def\argmin{\mathop{{\rm argmin}}\nolimits}
\renewcommand{\epsilon}{\varepsilon}
\newcommand{\beeq}{\begin{equation}}
\newcommand{\eneq}{\end{equation}}
\newcommand{\bear}{\begin{array}}
\newcommand{\enar}{\end{array}}
\newcommand{\bema}{\begin{displaymath}}
\newcommand{\enma}{\end{displaymath}}
\newcommand{\beea}{\begin{eqnarray}}
\newcommand{\enea}{\end{eqnarray}}
\newcommand{\lab}[1]{ \label{#1} }
\newcommand{\bC}{{\mathbbm{1}}}
\title[]{\PPP On the weighted inertia-energy approach to forced wave equations}
   \author[]{ Edoardo Mainini and Danilo Percivale}
\date{}
 \address{Universit\`{a} degli Studi di Genova, Dipartimento di   Ingegneria Meccanica, Energetica, $\quad$ Gestionale e dei Trasporti (DIME),
  Via all'Opera Pia, 15 - 16145 Genova Italy.}
  \email{ edoardo.mainini@unige.it; percivale@dime.unige.it}
\subjclass{}
\begin{document}
 \maketitle
\begin{abstract} \PPP
We show convergence of minimizers of weighted inertia-energy functionals to solutions of initial value problems for a class of nonlinear wave equations. 
The result is given for the nonhomogeneous case under a natural growth assumption on the forcing term. \KKK
\end{abstract}
\begin{center}
\end{center}
\begin{flushleft}
  {\bf AMS Classification Numbers (2010):\,} 49J45, 70H30\\
  {\bf Key Words:\,} Calculus of Variations, Nonlinear Wave Equation, 
 Weighted Variational Integrals\\
\end{flushleft}
\vskip0.5cm

\section{Introduction}

The variational approach by De Giorgi to second order initial value problems is based on the minimization of \textit{weighted inertia-energy} (WIE) functionals.
We consider the following nonlinear nonhomogeneous wave equation of defocusing type
\beeq\label{eqlim0}w''-\Delta w=- |w|^{r-2}w+f(t,x) \quad\mbox{ in $(0,+\infty)\times\mathbb R^N$,}\eneq
where $r>1$, along with the initial conditions $w(0,x)=w_0(x)$ and $w'(0,x)=w_1(x)$ in $\R^N$.
We introduce the associated WIE functional
\beeq\lab{DG}
\mathcal I_\eps(w):=\int_0^{+\infty}\int_{\R^N}e^{-t/\eps}\left\{\frac{\eps^2}{2}|w''|^2+\frac12|\nabla w|^2+\frac1r\,|w|^r-fw\right\}\,dx\,dt,\qquad\eps>0,
\eneq
to be minimized
among a suitable class of competing functions  $w:[0,+\infty)\times\mathbb R^N\to\mathbb R$ satisfying the same given initial conditions $w(0,x)=w_0(x)$, $w'(0,x)=w_1(x)$.  Here, the prime $'$ symbol stands for time derivative, $\nabla, \Delta$ are the gradient and Laplacian in the spatial variables,  and $f$ is a given space-time depending  forcing term.

 \medskip

It was conjectured by De Giorgi in \cite{DG}, see also \cite{DG2}, that if $r=2k$ for a positive integer $k$, if  $f\equiv 0$ and $w_\eps$  is a minimizer of $\mathcal I_\eps$ subject to the initial conditions $w(0,x)=w_0(x)$, $w'(0,x)=w_1(x)$ in $\R^N$, with $w_0\in C^\infty_c(\R^N)$, $w_1\in C^\infty_c(\R^N)$, 
then $w_\eps(t,x)\to w(t,x)$ for a.e. $(t,x)\in (0,+\infty)\times \R^N$ as $\eps\to 0$ and $w$ is a solution of the initial value problem
\beeq\lab{eqlim2}\left\{\begin{array}{ll}
&w''-\Delta w=-|w|^{r-2}w\quad\mbox{ in $(0,+\infty)\times\mathbb R^N$,}\\
&\\
&w(0,x)=w_0(x),\;\;\; w'(0,x)=w_1(x)\quad\mbox{ in $\mathbb R^N$.}\end{array}
\right.
\eneq
This conjecture has been proved by Serra and Tilli in \cite[Theorem 1.1]{ST} in the following form:  if $r>1$, $f\equiv0$,  $(\eps_n)\subset(0,1)$ is a vanishing sequence, and if $w_{n}$ is a minimizer of $\mathcal I_{\eps_n}$ subject to the initial conditions $w(0,x)=w_0(x)\in H^1(\R^N)\cap L^p(\R^N)$, $w'(0,x)=w_1(x)\in H^1(\R^N)\cap L^p(\R^N)$, then, along a suitable subsequence, $w_n$ converge  a.e. in space-time to a solution of the initial value problem
\eqref{eqlim2}.
The constructed solution is a weak solution satisfying the natural energy inequality, and the result by Serra and Tilli \cite{ST}
is thus recovering De Giorgi's conjecture if $r=2k$ as soon as such solutions are known to be unique, for instance in the subcritical regime $1< k< \frac{N}{N-2}$, see for instance \cite{IMM, Sh, Str, Str2}, that is, for every $k > 1$ if $N=2$, and for $k=2$  if $N=3$. 
 On the other hand, if $N\ge 3$, for supercritical  $k$, global uniqueness of weak solutions is not known in general  and this restricts the convergence result of \cite{ST} to subsequences as $\eps\to0$. 
   Therefore, uniqueness of variational solutions (i.e., obtained as limit of minimizers of the WIE functional) is still an open problem in the supercritical case, even for smooth compactly supported initial data. This problem is difficult due to the ill-posedness issues (see for instance \cite{L,T}) of  supercritical defocusing wave equations.
 
 \medskip
 
It is also possible to consider WIE functionals of the form
\[
\mathcal G_\eps(w):= \int_0^{+\infty}\int_{\R^N}e^{-t/\eps}\left\{\frac{\eps^2}{2}|w''|^2-f\, w\right\}\,dx\,dt+\int_0^{+\infty}e^{-t/\eps} W(w(t,\cdot))\,dt,\,\qquad\eps>0,
\] 
where the term
\begin{equation}\label{basicform}\int_{\R^N}\left(\frac12|\nabla w(t,x)|^2+\frac1r|w(t,x)|^r\right)\, dx\end{equation} in functional \eqref{DG}
has been replaced by a more general functional of the Calculus of Variations $ W(w(t,\cdot))$, in order to obtain a variational approximation scheme to hyperbolic PDEs having the following formal structure
 \beeq\lab{eqlim3}\left\{\begin{array}{ll}
&w''-D W(w(t,\cdot))=f(t,x)\quad\mbox{ in $(0,+\infty)\times\mathbb R^N$,}\\
&\\
&w(0,x)=w_0(x),\ w'(0,x)=w_1(x)\quad\mbox{ in $\mathbb R^N$},\end{array}
\right.
\eneq
where $D W$ is the Gateaux differential of $ W$. This has been achieved in \cite{ST2}, where the result of \cite{ST} has been extended to this generalized framework, still in the homogeneous case $f\equiv 0$.
 Nonhomogeneous equations have been investigated  in \cite{TT1}. More precisely, letting $f\in L^2((0,T);L^2(\mathbb R^N))$ for every $T>0$,  
 in \cite{TT1} it is shown that  there exists an approximating family $(f_\eps)_{\eps>0}$ of functions, converging  to $f$ as $\eps\to0$ in $L^2((0,T);L^2(\mathbb R^N))$  for every $T>0$, such that by letting $w_\eps$ minimize the WIE functional
  \[
\int_0^{+\infty}\int_{\R^N}e^{-t/\eps}\left\{\frac{\eps^2}{2}|w''|^2-f_\eps\, w\right\}\,dx\,dt+\int_0^{+\infty}e^{-t/\eps} W(w(t,\cdot))\,dt,\,\qquad\eps>0,
\]
subject to suitable initial conditions $w(0,x)=w_0(x)$, $w'(0,x)=w_1(x)$,  then up to subsequences there holds $w_\eps\to w$ a.e. in space-time, and $w$ is a solution to problem \eqref{eqlim3} featuring the same initial data. 

\medskip

The results in \cite{TT1} make use of a specific construction of the approximating sequence $(f_\eps)_{\eps>0}$, \MMM with $f_\eps$ identically vanishing for small and for large $t$, \KKK not allowing for the choice $f_\eps\equiv f$. 
\MMM  On the other hand, our objective will be to obtain  convergence of minimizers of the original functionals $\mathcal G_\eps$ (which feature the given forcing term $f$) to the solution of the  associated initial value problem \eqref{eqlim3},   at least under some unavoidable growth assumptions on $f$ with respect to $t$ (see also Proposition \ref{sharp} below). \KKK
 We will reach this result by considering the model case in which $W$ is in the form \eqref{basicform}, so that functional $\mathcal G_\eps$ is reduced to $\mathcal{I}_\eps$,  \MMM and with a sharp growth assumption on $f$, i.e., the Laplace transformability of the map  $t\mapsto \int_{\R^N}|f(t,x)|^2\,dx 
$  (the result would be easier under stronger conditions like global boundedness of such a map, as assumed in our recent note \cite{MP} about the ode case). \KKK
In fact, we shall consider the more general WIE functional
\beeq\label{WIF}
\mathcal F_\eps(w):=\int_0^{+\infty}\int_{\R^N}e^{-t/\eps}\left\{\frac{\eps^2}{2}|w''|^2+\frac12|\nabla w|^2+\frac1r\,|w|^r-F(t,x,w)\right\}\,dx\,dt,\quad\eps>0,
\eneq
where  $r>1$ and  $((0,+\infty)\times\R^N\times \R)\ni(t,x,v)\mapsto F(t,x,v)$ is  gloabally Lipschitz  in the variable $v$ (detailed hypotheses on $F$ will be introduced in the next section). Letting  $G:={\partial F}/{\partial v}$,
 the associated equation is
\beeq\label{eqlim4}
w''-\Delta w=-|w|^{r-2}w+G(t,x,w)\quad\mbox{ in $(0,+\infty)\times\mathbb R^N$.}
\eneq
  Therefore, in our main result stated in Theorem \ref{maintheorem} below we will prove that if $w_\eps$ minimizes $\mathcal F_\eps$ subject to  the initial conditions $w(0,x)=w_0(x)\in H^1(\R^N)\cap L^r(\R^N)$, $w'(0,x)=w_1(x)\in H^1(\R^N)\cap L^r(\R^N)$, then along a vanishing sequence $w_\eps$ converge a.e. in space-time to a solution  of the initial value problem for equation \eqref{eqlim4} with same initial data $w_0,w_1$.

\medskip

Concerning further results in the literature about WIE approach to second order initial value problems, we mention the proof of the De Giorgi conjecture in a weaker version in \cite{S}. General semilinear equations, including dissipative equations, are treated in \cite{LS2, ST2}, see also \cite{TT2} where the nonhomogeneous case is considered. Moreover, the WIE approach applies for ode's, we refer to the approach to Lagrangian Mechanics in \cite{LS}, where the authors treat systems of the form $\mathbf y''-\nabla U(\mathbf y)=0$ for some given potential function $U$. The system of ode's $\mathbf y''=\mathbf f(t)$ is analyzed in our recent note \cite{MP}, which is in fact a basic application of the technique that we present in this paper. The proof we will develop  combines the estimates for hyperbolic equations from \cite{ST,ST2,TT1} with the approach from \cite{MP}.   

 \smallskip
 
 \subsection*{Plan of the paper} In Section \ref{mainsect} we introduce the notation and the assumptions of the theory, and then we state the main result.
 Section \ref{easysect} provides a couple of preliminary estimates. In Section \ref{exsect} we prove existence of minimizers and deduce an Euler-Lagrange equation for (a time rescaled version of) functional \eqref{WIF}. We also discuss the sharpness of our growth assumptions. In section \ref{proofsect} we prove the main result. An appendix includes further technical lemmas.

\section{Main result}\label{mainsect}
In this section we introduce the assumptions on the function $F$ appearing in \eqref{WIF} and we provide the functional framework for the main result. 

\subsection{Assumptions}\label{21}
 Let $(t,x,v)\mapsto F(t,x,v)$ be a
$
 L^1_{loc}((0,+\infty)\times \R^N\times\R)
$ function. We suppose 
 that 
\begin{equation}\label{C1}
\mbox{for a.e. $(t,x)\in(0,+\infty)\times\R^N$, the map $v\mapsto F(t,x,v)$ belongs to $C^1(\R)$},
\end{equation}
we define
\beeq   \lab{f}           G(t,x,v):=\frac{\partial F}{\partial v}(t,x,v)\quad\mbox{and}\quad  f(t,x):=\sup_{v\in\mathbb R}\left| G(t,x,v)\right|\eneq
and we assume that 
\beeq\lab{hyp0}f\in L^\infty((0,T);L^2(\R^N)) \quad \forall\ T>0.\eneq
Moreover we assume that there exists   $\eps_F\in(0,1/2)$   such that 
\begin{equation}\label{hyp1}
C_F:=\sup_{\eps\in(0,\eps_F)}\frac1\eps\int_0^{+\infty}\int_{\R^N}e^{-t/\eps}|F(t,x,0)|\,dx\,dt<+\infty
\end{equation}
and
\beeq\label{laplace}
\displaystyle K_F:=\frac1{\eps_F}\int_0^{+\infty} e^{-{t}/{(2\eps_F)}}\|f(t,\cdot)\|^2_{L^2(\R^N)}\,dt<+\infty.
\eneq

Some remarks on the above assumptions are in order.
 It is worth noticing that by \eqref{C1} the function $(t,x)\to G (t,x,v)$ is Lebesgue measurable for every $v\in \R$ and the function $v\to G (t,x,v)$ is continuous for a.e. $(t,x)\in(0,+\infty)\times\R^N$. 
Therefore for every measurable $u:(0,+\infty)\times\R^N\to \R$ the function $(t,x)\to G (t,x,u(t,x))$
is measurable and \eqref{f} entails
$$\left |G (t,x,u(t,x))\right|\le f(t,x)\;\;\; \mbox{for a.e.}\ (t,x)\in(0,+\infty)\times\R^N$$
whence by \eqref{hyp0} the function $(t,x)\to G (t,x,u(t,x))$ belongs to $L^\infty((0,T);L^2(\R^N))$ for every $T>0$.
  \eqref{hyp0} implies in particular that, for a.e. $(t,x)\in(0,+\infty)\times\R^N$, there holds $\sup_{v\in\mathbb R}\left|G(t,x,v)\right|<+\infty$ so that the map $v\mapsto F(t,x,v)$ is globally Lipschitz on $\R$, i.e.
\begin{equation}\label{lip}
|F(t,x,v_1)-F(t,x,v_2)|\le f(t,x)|v_1-v_2|\qquad \mbox{for every $v_1\in \R$ and every $v_2\in\R$},
\end{equation}
 and then $G$ coincides with the partial distributional derivative of $F$ with respect to its third variable.
 We notice that the main assumption \eqref{laplace} entails existence of the Laplace transform of the function $(0,+\infty)\ni t\mapsto\|f(t,\cdot)\|^2_{L^2(\R^N)}$ in the half space $\{z\in \mathbb C: \mathcal R e z > \frac{1}{2\eps_F}\}$.
 An assumption of this kind is necessary for the theory, as we discuss in Proposition \ref{sharp} later on.
  On the other hand, \eqref{hyp1} requires the existence of the Laplace transform of  the function $(0,+\infty)\ni t\mapsto\|F(t,\cdot,0)\|_{L^1(\R^N)}$ along with some control on its behavior for small $t$. 
 
 \smallskip
 
A relevant case is $F(t,x,v)=b(t,x)\,v$ so that the above assumptions reduce to  $b\in L^\infty((0,T);L^2(\R^N))$ for every $T>0$ along with the requirement that $t\mapsto\|b(t,\cdot)\|^2_{L^2(\R^N)}$ is Laplace transformable. With this choice of $F$, the equation associated with functional \eqref{WIF}, i.e., equation \eqref{eqlim4}, reduces to the form \eqref{eqlim0}.
More generally, we may let $F(t,x,v)=b(t,x)\psi(v)$ with $b$ as above, with the additional assumption that $\psi$ is a $C^1$ function such that $\psi'$ is bounded over $\R$  and $\psi(0)=0$. This includes, for instance, the classical sine-Gordon equation and the variable mass sine-Gordon equation \cite{K}, if $\psi(v)=\cos v -1$.

\subsection{Rescaled energy functionals}

Let $r>1$, let us introduce the Banach space
 $$H:=H^1(\R^N)\cap L^r(\R^N),$$
with dual space denoted by $H'$, and for every $v\in H$ let
\[W(v):=\frac12\|\nabla v\|^2_{L^2(\mathbb R^N)}+\frac {1}r\| v\|^r_{L^r(\mathbb R^N)}.
\] \MMM
We denote by $DW$ the Gateaux derivative of $W$ and we let
\[
(DW(v),u):=\int_{\mathbb R^N}\nabla v\cdot\nabla u+|v|^{r-2}vu,\,\qquad v\in H,\;\; u\in H.
\] \KKK
We define now
$$\displaystyle H^2_{loc}([0, +\infty); L^2(\R^N)):=\displaystyle\bigcap_{T> 0}H^2((0,T);L^2(\R^N))$$
and for every $w_0,\ w_1\in H$ we let
\beeq\lab{Ueps}
\mathcal U_{\eps}^0:=\{u\in \mathcal U:  u(0,\cdot)=w_0(\cdot),\, u'(0,\cdot)=\eps w_1(\cdot)\},\quad\eps>0,
\eneq
where
\beeq\lab{U}
\mathcal U:=\left\{u\in H^2_{loc}([0,+\infty); L^2(\R^N)): \int_{0}^{+\infty}e^{-t}\|u''(t,\cdot)\|_{L^2(\R^N)}^2\,dt<+\infty\right\}.
\eneq

We recall from \cite{ST2,TT1}
the following elementary estimate. \MMM 
For every $u\in\mathcal U$ we have
\beeq\label{stimacon0}
\int_0^{+\infty}e^{-t}\|u(t,\cdot)\|^2_{L^2(\R^N)}\,dt\le 2\|u(0,\cdot)\|_{L^2(\R^N)}^2+4\int_0^{+\infty}e^{-t}\|u'(t,\cdot)\|^2_{L^2(\R^N)}\,dt
\eneq
and the same if $u$ is replaced with $u'$.
As a consequence, if $\eps<1$, \KKK
 for every $u\in \mathcal U_{\eps}^0$ there hold
\begin{equation}\label{'0}
 \int_{0}^{+\infty}e^{-t}\|u'(t,\cdot)\|_{L^2(\R^N)}^2\,dt\le 2\|w_1\|_{L^2(\mathbb R^N)}^2+4 \int_{0}^{+\infty}e^{-t}\|u''(t,\cdot)\|_{L^2(\R^N)}^2\,dt,\end{equation}
\begin{equation}\label{''0}\begin{aligned}
 \int_{0}^{+\infty}e^{-t}\|u(t,\cdot)\|_{L^2(\R^N)}^2\,dt&\le 2\|w_0\|_{L^2(\mathbb R^N)}^2\\&\quad+8\|w_1\|_{L^2(\mathbb R^N)}^2
+16 \int_{0}^{+\infty}e^{-t}\|u''(t,\cdot)\|_{L^2(\R^N)}^2\,dt.
\end{aligned}\end{equation}
Under the validity of the assumptions of Subsection \ref{21}, we set now for every $\eps\in(0,\eps_F)$, where $\eps_F\in(0,1/2)$ is the number appearing in \eqref{hyp1} and \eqref{laplace}, and for every $ u\in \mathcal U_{\eps}^0$
\beeq\lab{Phi}
\Phi_{\eps}(u):= \int_0^{+\infty}e^{-t}\int_{\R^N}F(\eps t,x,u(t,x))\,dx\,dt.
\eneq
By  \eqref{hyp1} and by Cauchy-Schwarz inequality we get
\[
\begin{aligned}
&\left|\Phi_{\eps}(u)\right|\le C_F+ \int_0^{+\infty}\int_{\R^N}e^{-t}|F(\eps t,x, u(t,x))-F(\eps t,x,0)|\,dx\,dt\\
&\qquad\le C_F+ \int_0^{+\infty}\int_{\R^N}e^{-t}f(\eps t,x) u(t,x)\,dx\,dt\\&\qquad\le C_F+\left(\int_0^{+\infty}e^{-t}\|f(\eps t,\cdot)\|^2_{L^2(\R^N)}\right)^{\frac12}\left( \int_{0}^{+\infty}e^{-t}\|u(t,\cdot)\|_{L^2(\R^N)}^2\,dt\right)^{\frac12},
\end{aligned}
\]
therefore by \eqref{''0},  by Young inequality and by a very simple estimate that follows from \eqref{hyp0} and \eqref{laplace} and whose proof is postponed to Lemma \ref{quicklemma} below, we get the existence of a constant $K_F^*$, only depending on $f$ and $\eps_F$, such that for every $\eps\in(0,\eps_F)$
\begin{equation}\label{below}
\begin{aligned}
&\left|\Phi_{\eps}(u)\right|\le C_F+\sqrt{K_F^*}\left( \int_{0}^{+\infty}e^{-t}\|u(t,\cdot)\|_{L^2(\R^N)}^2\,dt\right)^{\frac12}\\
&\qquad \le C_F+16K_F^*+\frac1{32}\|w_0\|^2_{L^2(\R^N)}+\frac18\|w_1\|^2_{L^2(\R^N)}+\frac1{4} \int_{0}^{+\infty}e^{-t}\|u''(t,\cdot)\|_{L^2(\R^N)}^2\,dt.
\end{aligned}
\end{equation}
Therefore we may define 
\beeq\lab{Jeps}
\mathcal J_{\eps}(u):=\displaystyle\int_0^{+\infty}e^{-t}\left\{\frac1{2\eps^2}\|u''(t,\cdot)\|_{L^2(\R^N)}^2+W(u(t,\cdot))\right\}dt- \Phi_{\eps}(u)\eneq
and we see that for $\eps\in(0,\eps_F)$ the functionals $\mathcal J_\eps$ are well defined and finite for every $u\in \mathcal U_{\eps}^0$.  



\subsection{Main result}
We are now in a position to state our main result.
\begin{theorem}\label{maintheorem} Assume that the hypotheses 
of {\rm Subsection \ref{21}}
hold true and let 
$w_0,\ w_1\in H$. Then, for every $\eps\in (0,\eps_F)$ there exists a solution $u_\eps$ to problem $$\min\{\mathcal J_\eps(u):u\in\mathcal U_\eps^0\}.$$ 
\MMM Let $(\eps_n)_{n\in\mathbb N}\subset (0,\eps_F)$ be a vanishing sequence, let $u_{\eps_n}$ minimize $\mathcal J_{\eps_n}$ over $\mathcal U_{\eps_n}^0$ and let $w_{\eps_n}(t,x):=u_{\eps_n}(\frac{t}{\eps_n},x)$ for every $n\in\mathbb N$.
 Then, there exist a subsequence $(\eps_{n_k})_{k\in\mathbb N}$ and a function $w\in H^1_{loc}((0,+\infty)\times\mathbb R^N)$ such that 
 $w_{\eps_{n_k}}\to w$   
  weakly in $H^1_{loc}((0,+\infty)\times\mathbb R^N)$ as $k\to+\infty$
  and such that $w$ satisfies
$$\mbox{$w\in W^{2,\infty}((0,T);H')\cap H^1((0,T); L^2(\R^N))\cap L^2((0,T);H^1(\R^N))\cap L^r((0,T)\times\R^N)\;$  $\forall \,T>0$},$$


$$w''-\Delta w=-|w|^{r-2}w+G(t,x,w)\quad\mbox{ in $\;\mathcal D'((0,+\infty)\times\R^N)$},$$

  $$  \mbox{$w(0,x)=w_0(x)$ for a.e. $x\in\R^N$,  $\;\;w'(0,x)=w_1(x)$  for a.e. $x	\in\R^N$},$$
\MMM
along with the following energy inequality
\beeq\label{energyinequality}
 \sqrt{\mathcal E(t)}\le \sqrt{\mathcal E(0)}+\sqrt{\frac t2\int_0^t\|f(s,\cdot)\|^2_{L^2(\R^N)}\,ds}\qquad \mbox{for a.e. $t>0$,}
\eneq
where $\mathcal E(t):=\dfrac12\|w'(t,\cdot)\|^2_{L^2(\R^N)}+W(w(t,\cdot))$ and $f$ is defined by \eqref{f}.
\KKK
\end{theorem}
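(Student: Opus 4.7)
The plan is to follow the WIE blueprint of \cite{ST,ST2,TT1}, combined with the rescaling device of \cite{MP}: work with $\mathcal J_\eps$ in the fast time variable where the weight $e^{-t}$ is $\eps$-independent, and only switch to the slow time $w_\eps(t,x)=u_\eps(t/\eps,x)$ after all $\eps$-uniform bounds have been produced. Existence of a minimizer $u_\eps\in\mathcal U_\eps^0$ is obtained by the direct method: the coercivity needed to extract a weakly convergent minimizing sequence is essentially already encoded in \eqref{below}, since combining $\mathcal J_\eps(u^n)\le C$ with \eqref{below} gives a bound on $\int_0^\infty e^{-t}\|(u^n)''\|_{L^2}^2\,dt$, which via \eqref{'0}--\eqref{''0} propagates to $(u^n)'$ and $u^n$, while the $W$-term supplies the bounds on $\int_0^\infty e^{-t}\|\nabla u^n\|_{L^2}^2\,dt$ and $\int_0^\infty e^{-t}\|u^n\|_{L^r}^r\,dt$. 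Weak lower semicontinuity of the quadratic and convex parts is standard, and continuity of $\Phi_\eps$ along the weak limit follows from the Lipschitz bound \eqref{lip} together with Rellich compactness. Perturbing $u_\eps$ by test functions $\varphi$ with $\varphi(0,\cdot)=\varphi'(0,\cdot)=0$ then yields the Euler--Lagrange identity, a fourth-order equation in $(t,x)$.

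The heart of the proof is to translate minimality into $\eps$-uniform bounds on $w_\eps$ on each bounded time interval. Comparing $u_\eps$ with an admissible competitor such as $\tilde u_\eps(t,x)=w_0(x)+\eps w_1(x)(1-e^{-t})$ and absorbing the $\Phi_\eps$-term by Cauchy--Schwarz and \eqref{laplace} produces a bound on $\int_0^\infty e^{-t}\bigl(\tfrac{1}{2\eps^2}\|u_\eps''\|_{L^2}^2+W(u_\eps)\bigr)dt$; passing to the slow time and proceeding as in \cite{ST2,TT1} yields, for every $T>0$, bounds on $w_\eps$ in $H^1((0,T);L^2(\R^N))\cap L^2((0,T);H^1(\R^N))\cap L^r((0,T)\times\R^N)$ and on $w_\eps''$ in $L^\infty((0,T);H')$. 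Extracting a subsequence $\eps_{n_k}\to 0$ along which $w_{\eps_{n_k}}\wconv w$ weakly in $H^1_{loc}$, Rellich--Kondrachov and a diagonal extraction upgrade this to pointwise a.e.\ convergence. In the slow-time Euler--Lagrange equation, the $\eps^2$ factor kills the fourth-order term in the limit, the gradient and kinetic terms pass weakly, the nonlinear term $|w_{\eps_{n_k}}|^{r-2}w_{\eps_{n_k}}$ is handled by a Vitali argument combining a.e.\ convergence with the $L^r_{loc}$ bound, and the forcing $G(t,x,w_{\eps_{n_k}})$ passes by dominated convergence thanks to $|G|\le f$ and \eqref{hyp0}, yielding the distributional equation for $w$. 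The initial data are recovered via the embedding $H^1((0,T);L^2(\R^N))\hookrightarrow C([0,T];L^2(\R^N))$ applied to $w_{\eps_{n_k}}$ and its time derivative, which preserve the prescribed values at $t=0$. The energy inequality \eqref{energyinequality} is obtained by first establishing its $\eps$-regularized counterpart through a De Giorgi--type manipulation on $\mathcal J_\eps$, giving the differential inequality $\mathcal E_\eps'(t)\le\sqrt{2\mathcal E_\eps(t)}\,\|f(t,\cdot)\|_{L^2(\R^N)}$, and then passing to $\liminf$ in its square-root integrated form.

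The main obstacle is passing to the limit in $|w_\eps|^{r-2}w_\eps$ in the supercritical regime, where no weak $L^r$ compactness is available, forcing us to rely on the a.e.\ convergence extracted through Rellich (and explaining why we can only obtain convergence of a subsequence, not of the whole family). A secondary technical point is that \eqref{laplace} controls only weighted integrals of $\|f(t,\cdot)\|_{L^2(\R^N)}^2$ rather than $\|f\|_{L^2((0,\infty)\times\R^N)}$, so all a priori estimates must be localised in $t$ and the limit passage tested against functions of compact temporal support; this is precisely the sharp framework identified in Section \ref{exsect}.
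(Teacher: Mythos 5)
Your overall blueprint (direct method, comparison competitors, slow-time rescaling, passage to the limit in the Euler--Lagrange equation, Vitali/Rellich for the nonlinear term) does match the paper's strategy, but two of your intermediate claims would not go through as stated.

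The first is the energy inequality. The pointwise differential inequality $\mathcal E_\eps'(t)\le\sqrt{2\mathcal E_\eps(t)}\,\|f(t,\cdot)\|_{L^2(\R^N)}$ cannot be derived at the $\eps$ level: multiplying the slow-time Euler--Lagrange equation by $w_\eps'$ produces, besides the expected $\int_{\R^N}G\,w_\eps'$ term, an $\eps^2$-weighted fourth-order contribution with no sign, so the pointwise energy $\mathcal E_\eps(t)=\tfrac12\|w_\eps'(t,\cdot)\|_{L^2}^2+W(w_\eps(t,\cdot))$ is not monotone-controlled along minimizers of $\mathcal J_\eps$. The correct object is the \emph{approximate energy} $E_\eps$ of \eqref{approxenergy}, in which $W(u_\eps(t,\cdot))$ is replaced by the exponentially weighted average $\int_t^{+\infty}(s-t)e^{-(s-t)}W(u_\eps(s,\cdot))\,ds$; combined with the first variation under \emph{time reparametrizations} (Lemma \ref{borrow} and the identity \eqref{dirac}), this makes the dissipative terms $D_\eps$ appear with a favorable sign and yields the Gronwall bound \eqref{tt1}. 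Even then, the forcing enters as the convolved quantity $Q_\eps$ from \eqref{Theta} rather than $\|f(\cdot,\cdot)\|_{L^2}^2$, and recovering $\int_0^t\|f(s,\cdot)\|_{L^2}^2\,ds$ in the limit requires the approximate-identity Lemma \ref{Lebesgue} together with \eqref{specific}. Your proposal omits the approximate energy and the approximate-identity step, which are the actual core of this part of the argument.

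The second is the recovery of $w'(0,\cdot)=w_1$. The embedding $H^1((0,T);L^2)\hookrightarrow C([0,T];L^2)$ applied to $w_\eps'$ is not uniform in $\eps$: rescaling \eqref{basicenergy} gives only $\int_0^{+\infty}e^{-s/\eps}\|w_\eps''(s,\cdot)\|_{L^2(\R^N)}^2\,ds\le 2\bar C/\eps$, which blows up, so $w_\eps''$ is not bounded in $L^2((0,T);L^2(\R^N))$ uniformly. What is uniform is the bound $w_\eps''\in L^\infty((0,T);H')$, obtained in Step 1 by testing \eqref{EL} against $\xi(t)h(x)$ with $\xi$ as in \eqref{xi}, $h\in H$, and then using Lemma \ref{appendixlemma}, Lemma \ref{Linfty} and Lemma \ref{lemma1}; one then uses \eqref{fundamental} to obtain precompactness of $t\mapsto\int_{\R^N}w_\eps'(t,x)h(x)\,dx$ in $C([0,T])$ for each fixed $h\in H$ and identifies $w'(0)=w_1$ in $H'$. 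Relatedly, your phrase ``proceeding as in \cite{ST2,TT1}'' for the $W^{2,\infty}((0,T);H')$ bound hides what is in fact the paper's main new estimate, which is not present in those references and rests on the \cite{MP}-style choice of dual test functions.
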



\begin{remark}\label{change} \rm It is worth noticing that functional $\mathcal J_\eps$ is a rescaling of the original functional $\mathcal F_\eps$ from \eqref{WIF}, and  if $u_\eps\in \argmin_{\mathcal U_\eps^0} \mathcal J_{\eps}$,  then 
 $w_\eps$ minimizes indeed $\mathcal F_\eps$
over $\mathcal V_\eps^0:=\{w\in\mathcal V_\eps: w(0,\cdot)=w_0(\cdot),\,w'(0,\cdot)=w_1(\cdot)\}$, where
\[
 \mathcal V_\eps:=\left\{w\in H^2_{loc}([0,+\infty); L^2(\R^N)): \int_{0}^{+\infty}e^{-t/\eps}\|w''(t)\|_{L^2(\R^N)}^2\,dt<+\infty\right\}.
\]
\end{remark}

\begin{remark}\rm If we change the definitions of $H$ and $W$  letting $H= H^1(\R^N)$ \KKK and $W(u):=\frac12\|\nabla u\|^2_{L^2(\R^N)}$, and accordingly we redefine functional \eqref{Jeps}, with the same proof we obtain the analogous convergence result of rescaled minimizers of functional $\mathcal J_\eps$ to a solution of the initial value problem for the  equation $w''-\Delta w=G(t,x,w)$.
We also expect that our result extends to the case in which $W$ is a more general functional as in \cite{ST2,TT1} thus covering equations of the form $w''-DW(w)=G(t,x,w)$.  
\end{remark}

\section{Some preliminary estimates}\label{easysect} In this section the assumptions of Subsection \ref{21} are understood to hold. In particular,
useful consequences of the assumptions \eqref{hyp0} and \eqref{laplace} are obtained in the following two lemmas.
\begin{lemma}\label{quicklemma}
For every $\eps\in(0,\eps_F)$ there holds
\[
\int_0^{+\infty} e^{-t}\|f(\eps t,\cdot)\|^2_{L^2(\R^N)}\,dt\le K_F^*
\]
and more generally for  every $\eps\in(0,\eps_F)$ and every $\alpha\ge0$ there holds
\[
\int_0^{+\infty} t^\alpha e^{-t}\|f(\eps t,\cdot)\|^2_{L^2(\R^N)}\,dt\le K_F^*(\alpha),
\]
where
\[
K_F^*(\alpha):=\int_0^{+\infty} \frac{t^{\alpha}e^{-t/\eps_F}}{\eps_F^{\alpha+1}}\|f(t,\cdot)\|^2_{L^2(\R^N)}\,dt+\Gamma(\alpha+1)\,\|f\|^2_{L^\infty((0,\alpha+1);L^2(\R^N))}
\]
and $K_F^*:=K_F(0)$. Here, $\Gamma$ denotes the Euler Gamma function.
\end{lemma}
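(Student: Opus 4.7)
The idea is to reduce to the original time variable via the change of variable $s=\eps t$ and split the resulting integral at the threshold $s=\alpha+1$, using the local boundedness \eqref{hyp0} to control the small-$s$ region and the Laplace-transformability condition \eqref{laplace} to control the large-$s$ region.

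Changing variable, I write
\[
\int_0^{+\infty} t^\alpha e^{-t}\|f(\eps t,\cdot)\|_{L^2(\R^N)}^2\,dt=\frac{1}{\eps^{\alpha+1}}\int_0^{+\infty} s^\alpha e^{-s/\eps}\|f(s,\cdot)\|_{L^2(\R^N)}^2\,ds,
\]
and split the right-hand side at $s=\alpha+1$. On the low range $s\in(0,\alpha+1)$, assumption \eqref{hyp0} yields $\|f(s,\cdot)\|_{L^2(\R^N)}^2\le \|f\|_{L^\infty((0,\alpha+1);L^2(\R^N))}^2$ for a.e.\ such $s$; since the inverse change of variable gives $s^\alpha e^{-s/\eps}\eps^{-\alpha-1}\,ds=t^\alpha e^{-t}\,dt$ and $\int_0^{+\infty} t^\alpha e^{-t}\,dt=\Gamma(\alpha+1)$, this region contributes at most $\Gamma(\alpha+1)\,\|f\|_{L^\infty((0,\alpha+1);L^2(\R^N))}^2$, which is exactly the second summand of $K_F^*(\alpha)$.

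The heart of the argument concerns the high range $s\ge\alpha+1$, where I want to replace the $\eps$-weight by the $\eps_F$-weight. Specifically, I aim to verify the pointwise comparison
\[
\frac{e^{-s/\eps}}{\eps^{\alpha+1}}\le \frac{e^{-s/\eps_F}}{\eps_F^{\alpha+1}}\qquad\text{for every $s\ge\alpha+1$ and every $\eps\in(0,\eps_F)$.}
\]
Taking logarithms and rearranging, this inequality is equivalent to the threshold condition
\[
s\ge(\alpha+1)\,\frac{\eps\,\eps_F\log(\eps_F/\eps)}{\eps_F-\eps}.
\]
The main (and essentially only) obstacle is to dominate this $\eps$-dependent threshold uniformly in $\eps\in(0,\eps_F)$. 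Applying the elementary estimate $\log x\le x-1$ with $x=\eps_F/\eps>1$ gives $\eps\log(\eps_F/\eps)\le \eps_F-\eps$, whence
\[
\frac{\eps\,\eps_F\log(\eps_F/\eps)}{\eps_F-\eps}\le \eps_F<1.
\]
Since $\alpha\ge 0$, this shows $(\alpha+1)\eps_F<\alpha+1\le s$, so the pointwise comparison holds.

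Integrating the pointwise comparison against $s^\alpha\|f(s,\cdot)\|_{L^2(\R^N)}^2$ over $(\alpha+1,+\infty)$ yields a contribution bounded by the first summand of $K_F^*(\alpha)$, and combining with the low-range estimate produces the general inequality. The first statement of the lemma is then the special case $\alpha=0$, since $K_F^*=K_F^*(0)$ and finiteness of $K_F^*(\alpha)$ follows from \eqref{laplace} together with $e^{-t/\eps_F}\le e^{-t/(2\eps_F)}$.
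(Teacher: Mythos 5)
Your proof is correct and follows essentially the same route as the paper: change variables to the original time scale, split at $s=\alpha+1$, bound the low range by $\Gamma(\alpha+1)\|f\|^2_{L^\infty((0,\alpha+1);L^2)}$ via \eqref{hyp0}, and dominate the high range by replacing the $\eps$-weight with the $\eps_F$-weight. The only cosmetic difference is that you verify the endpoint comparison $\eps^{-\alpha-1}e^{-s/\eps}\le\eps_F^{-\alpha-1}e^{-s/\eps_F}$ directly via $\log x\le x-1$, whereas the paper cites the monotonicity of $\eps\mapsto\eps^{-\alpha-1}e^{-t/\eps}$ on $(0,t/(\alpha+1))$, but these are the same observation.
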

\begin{proof}
By changing variables, since $\eps_F<1$ and since the map $(0,t)\ni\eps\mapsto \eps^{-1}e^{-t/\eps}$ is increasing for every $t>0$, we deduce
\[\begin{aligned}
&\int_0^{+\infty} e^{-t}\|f(\eps t,\cdot)\|^2_{L^2(\R^N)}\,dt\le\int_0^1\eps^{-1}e^{-t/\eps}\|f(t,\cdot)\|^2_{L^2(\R^N)}\,dt+\int_1^{+\infty}\eps^{-1}e^{-t/\eps}\|f(t,\cdot)\|^2_{L^2(\R^N)}\,dt\\&\qquad\qquad\le (1-e^{-1/\eps})\,\|f\|^2_{L^\infty((0,1);L^2(\R^N))}+\int_0^{+\infty}\eps_F^{-1}e^{-t/\eps_F}\|f(t,\cdot)\|^2_{L^2(\R^N)}\,dt
\end{aligned}\]
which is the desired estimate, the right hand side being finite thanks to \eqref{hyp0} and \eqref{laplace}. The more general estimate is obtained with the same argument, taking into account that $\eps_F<1$, that the map $(0,t/(\alpha+1))\ni\eps\mapsto \eps^{-1-\alpha}t^\alpha e^{-t/\eps}$ is increasing for every $t>0$ and that by definition of the Gamma function $\int_0^{+\infty}t^{\alpha} e^{-t/\eps}\,dt=\eps^{\alpha+1}\Gamma(\alpha+1)$. We notice that $K_F^*(\alpha)$ is finite for every $\alpha\ge 0$ thanks to \eqref{hyp0} and \eqref{laplace}.
\end{proof}

\KKK

\begin{lemma}\label{Linfty}
 For every  $s> 0$ \KKK and every $\eps\in(0,\eps_F)$, let 
 \beeq\label{Theta}Q_{\eps}(s):=\int_0^{+\infty}\eps^{-2}te^{-t/\eps}\|f(t+s,\cdot)\|^2_{L^2(\R^N)}\,dt.\eneq
  Then, for every $\eps\in(0,\eps_F)$, we have $Q_\eps\in L^\infty(0,T)$ for every $T>0$ and in particular
 \[
 \|Q_\eps\|_{L^\infty(0,T)}\le M_F(T):= \|f\|^2_{L^\infty((0,2T+1);L^2(\R^N))}+{\eps_F^{-2}}\,e^{T/\eps_F} \int_0^{+\infty}te^{-t/\eps_F}\|f(t,\cdot)\|^2_{L^2(\R^N)}\,dt.
 \]
\end{lemma}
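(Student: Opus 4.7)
The plan is to bound $Q_{\eps}(s)$ uniformly for $s\in(0,T)$ by splitting the integral defining $Q_{\eps}(s)$ at $t=T+1$ into two pieces, say $I_1(s)+I_2(s)$, and analyzing the two pieces separately. The $s$-independent choice of splitting point makes the uniform bound transparent and is chosen precisely so that on the tail region $\eps_F<t/2$ holds, which is what enables the main estimate for $I_2$.

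For the near part $I_1(s):=\int_0^{T+1}\eps^{-2}te^{-t/\eps}\|f(t+s,\cdot)\|^2_{L^2(\R^N)}\,dt$, since $s\in(0,T)$ and $t\in(0,T+1)$ the argument $t+s$ lies in $(0,2T+1)$, so by \eqref{hyp0} the $L^2(\R^N)$-norm is bounded by $\|f\|_{L^\infty((0,2T+1);L^2(\R^N))}$. Pulling this constant out and using the identity
\[
\int_0^{+\infty}\eps^{-2}te^{-t/\eps}\,dt=1,
\]
(which follows from the change of variable $u=t/\eps$ and $\Gamma(2)=1$), I obtain exactly the first term of $M_F(T)$.

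For the tail part $I_2(s):=\int_{T+1}^{+\infty}\eps^{-2}te^{-t/\eps}\|f(t+s,\cdot)\|^2_{L^2(\R^N)}\,dt$, the key observation is that for fixed $t>0$ the map $\eps\mapsto \eps^{-2}e^{-t/\eps}$ is increasing on $(0,t/2)$, as its logarithmic derivative equals $-2/\eps+t/\eps^2$. Since $t\ge T+1\ge 1>2\eps_F>2\eps$ on the range of $I_2$, this monotonicity applies and yields the pointwise bound $\eps^{-2}e^{-t/\eps}\le \eps_F^{-2}e^{-t/\eps_F}$, thereby trading the singular factor $\eps^{-2}$ for the $\eps$-independent constant $\eps_F^{-2}$. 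Then I change variables $\tau=t+s$, use $t=\tau-s\le\tau$ to dominate the polynomial factor, and split $e^{-(\tau-s)/\eps_F}\le e^{T/\eps_F}e^{-\tau/\eps_F}$ using $s\le T$. Extending the integration back to $(0,+\infty)$ produces the second term of $M_F(T)$, which is finite by \eqref{hyp0} and \eqref{laplace}.

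The only subtle point is the order of operations in the tail estimate: performing the change of variable $\tau=t+s$ \emph{before} replacing $\eps$ by $\eps_F$ would generate the uncontrollable factor $e^{s/\eps}$, which blows up as $\eps\to 0$. Swapping $\eps$ for $\eps_F$ first, which is legitimate precisely because the splitting threshold $T+1$ exceeds $2\eps_F$, converts this into the harmless bounded factor $e^{s/\eps_F}\le e^{T/\eps_F}$. This is the reason the constant $e^{T/\eps_F}$ appears in $M_F(T)$.
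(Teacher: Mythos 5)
Your proof is correct and follows essentially the same route as the paper's: both split the integral at $t=T+1$, bound the near part by $\|f\|^2_{L^\infty((0,2T+1);L^2(\R^N))}$ using $\int_0^{+\infty}\eps^{-2}te^{-t/\eps}\,dt=1$, and for the tail use the monotonicity of $\eps\mapsto\eps^{-2}te^{-t/\eps}$ on $(0,t/2)$ (the paper phrases it for $N_\eps(t)=\eps^{-2}te^{-t/\eps}$, but since the factor $t$ is $\eps$-independent this is the same observation as yours) before shifting variables and absorbing the exponential shift into $e^{T/\eps_F}$. Your concluding remark about why the $\eps\mapsto\eps_F$ replacement must precede the change of variable is a useful point that the paper leaves implicit.
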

\begin{proof}
We shall use the notations $N_\eps(t):=\eps^{-2}te^{-t/\eps}$ and $b_f(t):=\|f(t,\cdot)\|^2_{L^2(\R^N)}$. We notice that  $\int_0^{+\infty}N_\eps(t)\,dt=1$.
For every $\eps\in(0,\eps_F)$,  every $s>0$ \KKK and every $T>0$,  we have
\[\begin{aligned}
\int_0^{+\infty}N_\eps(t)b_f(t+s)\,dt&=\int_0^{T+1} N_\eps(t)b_f(t+s)\,dt+\int_{T+1}^{+\infty}N_\eps(t)b_f(t+s)\,dt\\&\le\|b_f(\cdot+s)\|_{L^\infty(0, T+1)}+\int_{T+1}^{+\infty}N_{\eps_F}(t)b_f(t+s)\,dt,
\end{aligned}\]
where the last inequality is due to the fact that $(0,t/2)\ni\eps\mapsto N_\eps(t)$ is an increasing function for every $t>0$, and here we have $\eps_F<1/2<(T+1)/2$. Therefore for every $s>0$ \KKK and every $T>0$
\[\begin{aligned}
Q_\eps(s)&\le \|b_f\|_{L^\infty(0, T+1+s)}
+\int_{T+s+1}^{+\infty}N_{\eps_F}(t-s)b_f(t)\,dt\\&\le
\|b_f\|_{L^\infty(0, T+1+s)}+{\eps_F^{-2}}\,e^{s/\eps_F}\int_0^{+\infty}te^{-t/\eps_F}b_f(t)\,dt
\end{aligned}\]
and the right hand side is finite due to the assumptions on $f$. This shows that $Q_\eps(s)$ is well defined for every  $s> 0$ \KKK and moreover we deduce that for every $T>0$
\[
\|Q_\eps\|_{L^\infty(0,T)}\le \|b_f\|_{L^\infty(0,2T+1)}+{\eps_F^{-2}}\,e^{T/\eps_F} \int_0^{+\infty}te^{-t/\eps_F}b_f(t)\,dt
\]
thus proving the result.
\end{proof}

\MMM
The function $Q_\eps$ from Lemma \ref{Linfty} is defined as a convolution and we may deduce its behavior as $\eps\to0$. Indeed, 
if $T>0$ we have $Q_\eps\to b_f$ in $L^1(0,T)$, where $b_f(t):=\|f(t,\cdot)\|^2_{L^2(\R^N)}$. This is a consequence of the following approximation lemma.
\begin{lemma}\label{Lebesgue}  Let $N_\eps(t):=\eps^{-2}te^{-t/\eps}$, $t\ge 0$, $\eps>0$. Let $b\in L^1_{loc}(0,+\infty)$ and $\delta>0$ be such that $\int_0^{+\infty} e^{-t/\delta}|b(t)|\,dt<+\infty$, and suppose $b\in L^1(0,T)$ for every $T>0$. Then $N_\eps(\cdot)b(\cdot+s)\in L^1(0,+\infty)$ for every $s>0$ and every $\eps\in(0,\delta)$. Moreover, letting $b_\eps(s):=\int_0^{+\infty}N_\eps(t)b(t+s)\,dt$, for every $T>0$ we have $b_\eps\in L^1(0,T)$ and
$b_\eps\to b$ in $L^1(0,T)$ as $\eps\to 0$. 
\end{lemma}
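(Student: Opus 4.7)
The plan is to view $N_\eps$ as a family of one-sided approximate identities --- a Gamma-type kernel satisfying $\int_0^{+\infty} N_\eps\,dt = 1$ and concentrating at $t=0$ as $\eps\to 0$ --- and to treat $b_\eps$ as a one-sided convolution of $b$ against $N_\eps$.

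The cornerstone estimate, valid for every $\eps\in(0,\delta)$, is the boundedness of the map $t\mapsto N_\eps(t)\,e^{t/\delta}=\eps^{-2}t\,e^{-(1/\eps-1/\delta)t}$ on $[0,+\infty)$, obtained by maximizing $t\mapsto t\,e^{-\alpha t}$ at $t=1/\alpha$ with $\alpha=1/\eps-1/\delta>0$. Writing $|b(t+s)|=e^{(t+s)/\delta}\cdot e^{-(t+s)/\delta}|b(t+s)|$ and using this bound, one immediately gets
\[
\int_0^{+\infty} N_\eps(t)|b(t+s)|\,dt\le C_{\eps,\delta}\,e^{s/\delta}\int_s^{+\infty}e^{-u/\delta}|b(u)|\,du<+\infty,
\]
which proves the integrability claim for each $s>0$; a parallel Fubini computation then yields $b_\eps\in L^1(0,T)$ for every $T>0$.

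For the convergence $b_\eps \to b$ in $L^1(0,T)$, the identity $\int_0^{+\infty}N_\eps=1$ gives
\[
b_\eps(s)-b(s)=\int_0^{+\infty}N_\eps(t)\bigl[b(t+s)-b(s)\bigr]\,dt,
\]
and Fubini--Tonelli produces
\[
\int_0^T|b_\eps(s)-b(s)|\,ds\le \int_0^{+\infty}N_\eps(t)\,g(t)\,dt,\qquad g(t):=\int_0^T|b(t+s)-b(s)|\,ds.
\]
I would split this outer integral at a small threshold $\eta>0$. On $(0,\eta)$, continuity of translation in $L^1(\R)$, applied to the trivial extension of $b|_{(0,T+1)}$, yields $g(t)\to 0$ as $t\to 0^+$, so $\sup_{(0,\eta)}g$ can be made arbitrarily small (while the mass of $N_\eps$ on $(0,\eta)$ is at most $1$). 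On $(\eta,+\infty)$, I would use the crude estimate $g(t)\le 2\|b\|_{L^1(0,T+t)}\le 2\,e^{(T+t)/\delta}\int_0^{+\infty}e^{-u/\delta}|b(u)|\,du$ together with the explicit computation
\[
\int_\eta^{+\infty}N_\eps(t)\,e^{t/\delta}\,dt=\eps^{-2}\,e^{-\alpha_\eps\eta}\Bigl(\frac{\eta}{\alpha_\eps}+\frac{1}{\alpha_\eps^2}\Bigr)=O\bigl(\eps^{-1}e^{-\eta/\eps}\bigr)\qquad(\eps\to 0),
\]
with $\alpha_\eps=1/\eps-1/\delta$, which drives this contribution to zero at fixed $\eta$. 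A standard $\mu/2+\mu/2$ argument then closes the convergence proof.

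The main obstacle is this tail piece: one must reconcile the merely Laplace-type integrability of $b$ with the exponential moments of $N_\eps$, and the restriction $\eps<\delta$ is used essentially to ensure that $N_\eps(t)\,e^{t/\delta}$ has a suitably small tail so that the weight $e^{-u/\delta}|b(u)|$ can be brought into play. Everything else is a standard approximate-identity argument.
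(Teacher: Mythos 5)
Your proof is correct, but it follows a genuinely different route from the paper's. The paper defines weighted objects $\tilde N_\eps(z):=\delta^{-2}(\delta-\eps)^2 e^{-z/\delta}N_\eps(-z)\bC_{(-\infty,0]}(z)$ and $\tilde b_\eps(z):=\delta^2(\delta-\eps)^{-2}e^{-z/\delta}b(z)\bC_{(0,+\infty)}(z)$, observes that $(\tilde N_\eps)$ is a bona fide approximate identity on $\R$ and that $\tilde b_\eps\to\tilde b$ in $L^1(\R)$, and then invokes the classical approximate-identity theorem together with Young's convolution inequality to get $\tilde N_\eps*\tilde b_\eps\to\tilde b$ in $L^1(\R)$; the point is the identity $(\tilde N_\eps*\tilde b_\eps)(s)=e^{-s/\delta}b_\eps(s)$, which transfers the conclusion to $b_\eps$. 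You instead prove the approximate-identity convergence directly: after passing via Fubini--Tonelli to $\int_0^{+\infty}N_\eps(t)g(t)\,dt$ with $g(t)=\int_0^T|b(t+s)-b(s)|\,ds$, you split the integral at a threshold $\eta$, control the near-field piece by $L^1$-continuity of translation (using only $b\in L^1_{\rm loc}$), and kill the far-field piece with the explicit bound $\int_\eta^{+\infty}N_\eps(t)e^{t/\delta}\,dt=O(\eps^{-1}e^{-\eta/\eps})$ made possible by the hypothesis $\eps<\delta$ and the Laplace-type integrability of $b$. Your approach is more self-contained and elementary (it re-derives rather than cites the approximate-identity convergence, and it dispenses with Young's inequality), at the cost of a slightly longer $\mu/2+\mu/2$ argument; the paper's version is shorter once the auxiliary weighted kernels are set up, because it offloads the analysis onto two standard theorems. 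Both handle the tail in the same spirit --- the crux in either case is that $\eps<\delta$ gives $N_\eps(t)e^{t/\delta}$ a summable, rapidly decaying tail, allowing the weight $e^{-u/\delta}$ to be peeled off $b$.
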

\begin{proof}
For every $s>0$ and every $\eps\in(0,\delta)$ we have 
\[
\int_0^{+\infty}N_\eps(t)|b(s+t)|\,dt=\eps^{-2}\int_s^{+\infty}(t-s)e^{-(t-s)/\eps}|b(t)|\,dt\le \eps^{-2}e^{s/\eps}\int_0^{+\infty}te^{-t/\eps}|b(t)|\,dt<+\infty,\]
and if $T>0$,
estimating in analogous way we see that
\[\begin{aligned}\int_0^T|b_\eps(s)|\,ds=\int_0^T\frac{e^{s/\eps}}{\eps^2}\int_s^{+\infty}(t-s)e^{-t/\eps}|b(t)|\,dt\,ds\le  \frac{Te^{T/\eps}}{\eps^2}\int_0^{+\infty}te^{-t/\eps}|b(t)|\,dt<+\infty\end{aligned}\]
so that indeed $N_\eps(\cdot)b(\cdot+s)\in L^1(0,+\infty)$ and $b_\eps\in L^1(0,T)$.
\MMM For every $\eps\in (0,\delta)$ and $z\in\R$ let  $$\tilde N_{\eps}(z):=\delta^{-2}(\delta-\eps)^2 e^{-z/\delta} N_{\eps}(-z)\bC_{(-\infty,0]}(z),\qquad \tilde b_{\eps}(z):=\delta^{2}(\delta-\eps)^{-2} e^{-z/\delta}b(z)\bC_{(0, +\infty)}(z).$$ It is readily seen that the family $(\tilde N_{\eps})_{\eps\in (0,\delta)}$ is an approximate identity on $\R$  (see \cite{A,B}) and that $\tilde b_{\eps}\to \tilde b$ in $L^1(\mathbb R)$ as $\eps\to0$, where $\tilde b(z):=e^{-z/\delta}b(z)\bC_{(0, +\infty)}(z).$ Therefore, $\tilde N_\eps\ast\tilde b\to\tilde b$ in $L^1(\R)$ as $\eps\to0$, thus Young's convolution inequality entails   $\tilde N_{\eps}*\tilde b_{\eps}\to \tilde b$ in $L^1(\mathbb R)$. But for every $s> 0$ we have
\[\displaystyle(\tilde N_{\eps}*\tilde b_{\eps})(s)=\int_{-\infty}^{+\infty}\tilde N_{\eps}(s-z)\tilde b_{\eps}(z)\,dz
 =e^{-s/\delta}\int_{0}^{+\infty} N_{\eps}(t) b(s+t)\,dt=e^{-s/\delta}b_\eps(s)=: b^*_{\eps}(s),\\
\]
hence $b^*_{\eps}\to \tilde b$ in $L^1(0,+\infty)$ as $\eps\to 0$, which entails
$b_{\eps}\to b$ in $L^1(0,T)$ for every $T>0$ as claimed. \KKK
\end{proof}
\KKK

\section{Existence of minimizers and Euler-Lagrange equation}\label{exsect}
Also in this section the assumptions of Subsection \ref{21} are understood to hold.
The first step towards the proof of the main theorem  is to minimize functional $\mathcal J_\eps$ from \eqref{Jeps} in the class 
$\mathcal U_\eps$ from \eqref{Ueps}-\eqref{U}.

\begin{lemma}\label{exist} Let $\eps\in (0,\eps_F)$.
There exists a  solution to the minimization problem 
\begin{equation}\label{min}
\min\left\{ \mathcal J_\eps(u): u\in\mathcal U^0_\eps\right\}.
\end{equation}
Moreover, 
 there exists an explicit constant $\bar C$ (only depending on $F$, $w_0$ and $w_1$) such that if $u_\eps$ is solution to \eqref{min} there holds
\begin{equation}\label{basicenergy}
\int_0^{+\infty}e^{-t}\left\{\frac1{2\eps^2}\|u_{\eps}''(t)\|_{L^2(\R^N)}^2+W(u_{\eps}(t))\right\}\,dt\le \bar C,
\end{equation}
\end{lemma}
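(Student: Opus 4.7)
The strategy is to apply the direct method of the calculus of variations to $\mathcal J_\eps$ on the class $\mathcal U_\eps^0$, and to derive the energy bound \eqref{basicenergy} by testing the resulting minimizer against an explicit affine competitor. To show that $\inf\mathcal J_\eps<+\infty$, I would use the ansatz $u^*(t,x):=w_0(x)+\eps\, t\, w_1(x)\in\mathcal U_\eps^0$, which satisfies $(u^*)''\equiv 0$. A direct estimate of $\mathcal J_\eps(u^*)$, combining the pointwise bound $W(u^*(t,\cdot))\le \|\nabla w_0\|^2_{L^2(\R^N)}+\eps^2 t^2\|\nabla w_1\|^2_{L^2(\R^N)}+c_r\|w_0\|^r_{L^r(\R^N)}+c_r\eps^r t^r\|w_1\|^r_{L^r(\R^N)}$ (integrable against $e^{-t}$) with \eqref{below} applied to $|\Phi_\eps(u^*)|$ and Lemma \ref{quicklemma}, yields $\mathcal J_\eps(u^*)\le M$, with $M$ depending only on $F$, $w_0$, $w_1$ and uniform in $\eps\in(0,\eps_F)$. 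Conversely, since $\eps<1/2$ entails $\tfrac1{2\eps^2}-\tfrac14\ge 1$, \eqref{below} provides the lower bound
\[
\mathcal J_\eps(u)\ge \frac1{4\eps^2}\int_0^{+\infty}e^{-t}\|u''(t,\cdot)\|^2_{L^2(\R^N)}\,dt+\int_0^{+\infty}e^{-t}W(u(t,\cdot))\,dt - C_0
\]
for every $u\in\mathcal U_\eps^0$, where $C_0:=C_F+16K_F^*+\tfrac1{32}\|w_0\|^2_{L^2(\R^N)}+\tfrac18\|w_1\|^2_{L^2(\R^N)}$. Hence $\mathcal J_\eps$ is bounded below and coercive in the relevant weighted norms.

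For a minimizing sequence $(u_n)\subset\mathcal U_\eps^0$, the two bounds above yield uniform control of the weighted integrals $\int_0^{+\infty}e^{-t}\|u_n''\|^2_{L^2(\R^N)}\,dt$, $\int_0^{+\infty}e^{-t}\|\nabla u_n\|^2_{L^2(\R^N)}\,dt$, and $\int_0^{+\infty}e^{-t}\|u_n\|^r_{L^r(\R^N)}\,dt$, and, via \eqref{'0}-\eqref{''0}, of the corresponding weighted norms of $u_n$ and $u_n'$. Thus on every $(0,T)$ the sequence $(u_n)$ is bounded in $H^2((0,T);L^2(\R^N))\cap L^2((0,T);H^1(\R^N))\cap L^r((0,T)\times\R^N)$. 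A diagonal extraction produces a subsequence $u_n\weak u$ weakly in each of these spaces for every $T>0$. The continuous embedding $H^2((0,T);L^2(\R^N))\hookrightarrow C^1([0,T];L^2(\R^N))$ ensures that the initial conditions $u(0,\cdot)=w_0$ and $u'(0,\cdot)=\eps w_1$ pass to the limit in $L^2(\R^N)$, so $u\in\mathcal U_\eps^0$. The quadratic inertia and gradient terms of $\mathcal J_\eps$, together with the convex $L^r$-term, are all weakly lower semicontinuous under the established convergences.

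The crux of the argument, and the main obstacle, is passing to the limit in the nonlinear term $\Phi_\eps$, for which weak convergence alone is insufficient. I would resolve it as follows. First, by Aubin-Lions compactness on each $(0,T)\times B_R$ (using the compact embedding $H^1(B_R)\hookrightarrow\hookrightarrow L^2(B_R)$ and the $H^1$-in-time bound), $u_n\to u$ strongly in $L^2((0,T);L^2(B_R))$ and, along a further subsequence, a.e.\ on $(0,+\infty)\times\R^N$. Second, from the Lipschitz bound $|F(\eps t,x,u_n)-F(\eps t,x,u)|\le f(\eps t,x)|u_n-u|$, Cauchy-Schwarz in space gives
\[
|\Phi_\eps(u_n)-\Phi_\eps(u)|\le \int_0^{+\infty}e^{-t}\|f(\eps t,\cdot)\|_{L^2(\R^N)}\|u_n(t,\cdot)-u(t,\cdot)\|_{L^2(\R^N)}\,dt.
\]
I would then split the integration domain: the time-tail $t>T$ is absorbed via Cauchy-Schwarz in $t$, using Lemma \ref{quicklemma} (so that the tail of $\int_0^{+\infty}e^{-t}\|f(\eps t,\cdot)\|^2_{L^2(\R^N)}\,dt$ vanishes as $T\to+\infty$) together with the uniform bound \eqref{''0}; for $t\le T$ the spatial tail $|x|>R$ is absorbed via Cauchy-Schwarz in $x$ using the $L^2_{t,x}$ integrability of $f(\eps\cdot,\cdot)$ on $(0,T)\times\R^N$ (a consequence of \eqref{hyp0}) and the same uniform $L^2$ bound on $u_n$; finally, on $(0,T)\times B_R$ the strong $L^2$ convergence of $u_n$ concludes. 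This proves $\Phi_\eps(u_n)\to\Phi_\eps(u)$, so $u$ minimizes $\mathcal J_\eps$ on $\mathcal U_\eps^0$.

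The energy bound \eqref{basicenergy} then follows by combining $\mathcal J_\eps(u_\eps)\le\mathcal J_\eps(u^*)\le M$ with the lower bound of the first paragraph, yielding
\[
\int_0^{+\infty}e^{-t}\left\{\frac1{2\eps^2}\|u_\eps''(t,\cdot)\|^2_{L^2(\R^N)}+W(u_\eps(t,\cdot))\right\}\,dt\le 2(M+C_0)=:\bar C,
\]
with $\bar C$ depending only on $F$, $w_0$, $w_1$, as required. The decisive technical step is the weak-to-strong upgrade (via Aubin-Lions) combined with the tail estimates supplied by the growth hypotheses \eqref{hyp0} and \eqref{laplace}, which together make the nonlinearity $\Phi_\eps$ sequentially continuous on bounded sets in the weighted energy topology.
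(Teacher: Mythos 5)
Your proposal is correct and follows essentially the same route as the paper: the affine competitor $w_0+\eps t w_1$ and the lower bound from \eqref{below} give uniform coercivity; a minimizing sequence is handled by weak compactness plus local strong $L^2$ compactness; continuity of $\Phi_\eps$ along the sequence is obtained from the Lipschitz bound \eqref{lip} together with a three-way split of $(0,+\infty)\times\R^N$ into a large box $(0,T)\times B_R$, a time tail $(T,+\infty)$, and a spatial tail $(0,T)\times B_R^c$; and the energy bound follows from comparison with the competitor. The only cosmetic differences are that you invoke Aubin-Lions in place of the paper's direct use of Rellich on $Q_{T,R}$, and you control the time tail via the vanishing of the tail of $\int e^{-t}\|f(\eps t,\cdot)\|^2$ rather than via the explicit $e^{-T/2}$ decay used in the paper -- both are valid and interchangeable.
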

\begin{proof}

 Since $\eps<\eps_F<1$, estimate \eqref{below} shows that $\mathcal J_\eps$ is  bounded from below over $\mathcal U_\eps^0$ by a constant that depends only on $F, w_0, w_1$.

\MMM We  notice that the function $w_0(x)+t\eps w_1(x)$ belongs to $\mathcal U_\eps^0$.
Moreover, \KKK since $\eps<\eps_F<1/2$, by Cauchy-Schwarz inequality, by \eqref{hyp1} and by Lemma \ref{quicklemma} we have similarly as in estimate  \eqref{below}
\[\begin{aligned}
\mathcal J_\eps(w_0+\eps t w_1)&=\int_0^{+\infty}\int_{\mathbb R^N}e^{-t}\left\{\frac12|\nabla w_0+t\eps \nabla w_1|^2+\frac1r|w_0+\eps tw_1|^r- F(\eps t,x,w_0+\eps t w_1)\right\}\\
& \le(\|\nabla w_0\|^2_{L^2(\R^N)}+\|\nabla w_1\|^2_{L^2(\R^N)})+2^{r-1}\Gamma(r+1)( \| w_0\|^r_{L^r(\R^N)}+\| w_1\|^r_{L^r(\R^N)})\\
&\qquad\qquad+C_F+\sqrt{K_F^*}\,(2\|w_0\|^2_{L^2(\R^N)}+\|w_1\|^2_{L^2(\R^N)})^{1/2},
\end{aligned}\]
 showing in particular that $J_\eps(w_0+t\eps w_1)<+\infty$ so that indeed $\inf\left\{ \mathcal J_\eps(u): u\in\mathcal U^0_\eps\right\}\in \R$.

Let  $(u_n)_{n\in\mathbb N}\in \mathcal U_\eps^0$ be a minimizing sequence for \eqref{min}. It is not restrictive to assume that the minimizing sequence satisfies $\mathcal J_\eps(u_n)\le 1+\mathcal J_\eps(w_0+t\eps w_1)$ for every $n$, and this implies, along with the previous estimate of  $J_\eps(w_0+t\eps w_1)$ and \eqref{below},  that there exists an explicit constant $\bar C$, only depending on $F,w_0, w_1$, such that for every $n\in\mathbb N$
\[\begin{aligned}
&\int_0^{+\infty}\int_{\R^N}e^{-t}\left\{\frac1{2\eps^2}|u_n''(t,x)|^2+\frac12|\nabla u_n(t,x)|^2+\frac1r|u_n(t,x)|^r\right\}\,dt\,dx\\&\qquad\qquad\le \frac12\bar C+\frac1{4}\int_0^{+\infty}\int_{\R^N}e^{-t}|u_n''(t,x)|^2\,dx\,dt,
\end{aligned}
\]
and since $\eps<\eps_F<1$ we conclude that 
\begin{equation}\label{unif}\int_0^{+\infty}\int_{\R^N}e^{-t}\left\{\frac1{2\eps^2}|u_n''(t,x)|^2+\frac12|\nabla u_n(t,x)|^2+\frac1r|u_n(t,x)|^r\right\}\,dt\,dx\le \bar C.\end{equation}
Therefore, by taking also \eqref{'0}-\eqref{''0} into account, the sequence $(e^{-t/2}u_n)_{n\in\mathbb N}$ is uniformly bounded in $H^2((0,+\infty);L^2(\R^N))$, in particular it converges weakly to some suitable $v$ in $L^2((0,+\infty)\times\R^N)$ up to extracting a subsequence. Moreover,  if  $Q_{T_R}:=(0,T)\times B_R$, where $B_R$ is a ball of radius $R$ in $\mathbb R^N$,  we obtain that $u_n$ $u_n'$ and $\nabla u_n$ enjoy uniform bounds in $L^{2}(Q_{T,R})$, thus up to further subsequences, $u_n$ converges to a suitable $u$ strongly in $L^{2}(Q_{T,R})$ (and then pointwise) by Rellich theorem. The pointwise convergence allows to identify the limit, i.e., $v=e^{-t/2} u$.
By \eqref{unif} and the weak lower semicontinuity of $L^p$ norms, $u$ satisfies \eqref{basicenergy}.

We further notice that by \eqref{lip} and Lemma \ref{quicklemma} we have
\[\begin{aligned}
&\int_0^{+\infty}\int_{\R^N}e^{-t}|F(\eps t,x,u_n(t,x))-F(\eps t,x, u(t,x))|\,dx\,dt\\&\;\;\; \le\int_0^{+\infty}\int_{\R^N}e^{-t}f(\eps t,x)|u_n(x,t)-u(x,t)|\,dx\,dt\\&\;\;\;
= \iint_{Q_{R,T}}e^{-t}f(\eps t,x)|u_n(x,t)-u(x,t)|\,dx\,dt+\iint_{Q_{R,T}^c} e^{-t}f(\eps t,x)|u_n(x,t)-u(x,t)|\,dx\,dt\\
&\;\;\;\le \sqrt{K_F^*}\left(\iint_{Q_{R,T}}|u_n(t,x)-u(t,x)|^2\,dx\,dt\right)^{\frac12}+\iint_{Q_{R,T}^c} e^{-t}f(\eps t,x)|u_n(x,t)-u(x,t)|\,dx\,dt
\end{aligned}
\]
for every $T>0$, $R>0$, $n\in\mathbb N$.
We split the last integral in an integral over $(T,+\infty)\times \R^N$ plus an integral over $(0,T)\times B_R^c$, and we estimate as
\[\begin{aligned}
&\int_T^{+\infty} \int_{\R^N} e^{-t}f(\eps t,x)|u_n(x,t)-u(x,t)|\,dx\,dt\le \sqrt{K_F^*}\int_T^{+\infty}e^{-t}\|u_n(t,\cdot)-u(t,\cdot)\|_{L^2(\R^N)}\,dt\\&\qquad\le
\sqrt{K_F^*}\, e^{-T/2}\left(\int_0^{+\infty}\int_{\R^N}e^{-t}|u_n(t,x)-u(t,x)|^2\,dx\,dt\right)^\frac12\le \sqrt {C_*\, K_F^*\, e^{-T}},
\end{aligned}\]
where the latter inequality is due to \eqref{''0}, since $u_n$ satisfies \eqref{unif} and $u$ satisfies \eqref{basicenergy}, by letting $C_*=8\|w_0\|^2_{L^2(\R^N)}+32\|w_1\|^2_{L^2(\R^N)}+128\bar C$. Moreover,  
\[\begin{aligned}
\int_0^{T} \int_{B_R^c} e^{-t}f(\eps t,x)|u_n(x,t)-u(x,t)|\,dx\,dt\le \sqrt{C_*}
\left(\int_0^T\int_{B_R^c}e^{-t}f(\eps t,x)^2\,dx\,dt\right)^\frac12
\end{aligned}\]
and the right hand side vanishes as $R\to+\infty$, since Lemma \ref{quicklemma} implies that $e^{-t}f(\eps t,x)^2$ belongs to $L^1((0,T)\times\mathbb R^N)$.
Therefore, since $u_n$ strongly converge to $ u$ in $L^2(Q_{R,T})$, we get
\[\begin{aligned}&\limsup_{n\to+\infty} \int_0^{+\infty}\int_{\R^N}e^{-t}|F(\eps t,x,u_n(t,x))-F(\eps t,x, u(t,x))|\,dx\,dt\\&\qquad\qquad\le \sqrt{C_*}
\left(\int_0^T\int_{B_R^c}e^{-t}f(\eps t,x)^2\,dx\,dt\right)^\frac12 + \sqrt{ C_*\, K_F^*\, e^{-T}},\end{aligned}\]
and by taking the limit as $R\to+\infty$ and then the limit as $T\to+\infty$ we get
\[
\lim_{n\to+\infty}\int_0^{+\infty}\int_{\R^N}e^{-t}F(\eps t,x,u_n(t,x))\,dx\,dt=\int_0^{+\infty}\int_{\R^N}e^{-t}F(\eps t,x,u(t,x))\,dx\,dt.
\]
The latter convergence, along with the semicontinuity of $L^p$ norms, allows to conclude that
\[
\mathcal J_\eps(u)\le \liminf_{n\to+\infty}\mathcal J_\eps(u_n),
\]
thus $u$ is solution to problem \eqref{min}. 

Finally, assuming that 
 $u_\eps $ solves problem \eqref{min}, since $\mathcal J_\eps(u_\eps)\le\mathcal J_\eps(w_0+t\eps w_1) $ and since we can apply \eqref{below} with $\bar u=u_\eps$, by repeating the arguments in the first part of the proof we conclude that $u_\eps$ itself satifies \eqref{basicenergy}. 
 \end{proof}
 
 \MMM
 In the limit as $\eps\to 0$ we also have the following estimate for minimizers.
 \begin{lemma}\label{newlemma}
 For every $\eps\in(0,\eps_F)$, let  $u_\eps$ be a solution to problem \eqref{min}. Then
\[
\limsup_{\eps\to 0} \int_0^{+\infty}e^{-t}\left\{\frac1{2\eps^2}\|u_{\eps}''(t)\|_{L^2(\R^N)}^2+W(u_{\eps}(t))\right\}\,dt\le W(w_0).
\]
\end{lemma}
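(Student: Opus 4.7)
\smallskip

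\noindent\textbf{Proof proposal for Lemma \ref{newlemma}.} The plan is to test the minimality of $u_\eps$ against the simple affine competitor
\[
v_\eps(t,x) := w_0(x) + \eps t\, w_1(x),
\]
which satisfies $v_\eps(0,\cdot)=w_0$, $v_\eps'(0,\cdot)=\eps w_1$ and $v_\eps''\equiv 0$, so that $v_\eps\in\mathcal U_\eps^0$. From $\mathcal J_\eps(u_\eps)\le \mathcal J_\eps(v_\eps)$ and the fact that $v_\eps''=0$, rearranging gives
\[
\int_0^{+\infty}e^{-t}\!\left\{\frac{1}{2\eps^2}\|u_\eps''(t)\|_{L^2(\R^N)}^2+W(u_\eps(t))\right\}dt
\le \int_0^{+\infty}e^{-t}W(v_\eps(t))\,dt+\bigl(\Phi_\eps(u_\eps)-\Phi_\eps(v_\eps)\bigr).
\]
The proof reduces to showing that the first term on the right converges to $W(w_0)$ and that the discrepancy of the $\Phi_\eps$'s is infinitesimal as $\eps\to 0$.

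For the convergence of the leading term, I would expand
\[
W(v_\eps(t))=\tfrac12\|\nabla w_0+\eps t\nabla w_1\|_{L^2(\R^N)}^2+\tfrac1r\|w_0+\eps tw_1\|_{L^r(\R^N)}^r,
\]
observe pointwise convergence $W(v_\eps(t))\to W(w_0)$ for every $t\ge 0$, and apply dominated convergence using the bound
\[
W(v_\eps(t))\le \|\nabla w_0\|_{L^2(\R^N)}^2+t^2\|\nabla w_1\|_{L^2(\R^N)}^2+\tfrac{2^{r-1}}{r}\bigl(\|w_0\|_{L^r(\R^N)}^r+t^r\|w_1\|_{L^r(\R^N)}^r\bigr),
\]
valid for $\eps<1$ and integrable against $e^{-t}$, so that $\int_0^{+\infty} e^{-t}W(v_\eps)\,dt\to W(w_0)$.

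The main point is the bound on $\Phi_\eps(u_\eps)-\Phi_\eps(v_\eps)$. By the Lipschitz property \eqref{lip},
\[
|\Phi_\eps(u_\eps)-\Phi_\eps(v_\eps)|\le \int_0^{+\infty}\!\!\int_{\R^N}e^{-t}f(\eps t,x)|u_\eps(t,x)-v_\eps(t,x)|\,dx\,dt,
\]
and Cauchy--Schwarz together with Lemma \ref{quicklemma} bound this by
$\sqrt{K_F^*}\bigl(\int_0^{+\infty}e^{-t}\|u_\eps-v_\eps\|_{L^2(\R^N)}^2 dt\bigr)^{1/2}$.
Since $u_\eps-v_\eps$ has vanishing initial position and initial velocity, I would apply estimate \eqref{''0} with $w_0,w_1$ both replaced by $0$, obtaining
\[
\int_0^{+\infty}e^{-t}\|u_\eps-v_\eps\|_{L^2(\R^N)}^2\,dt\le 16\int_0^{+\infty}e^{-t}\|u_\eps''\|_{L^2(\R^N)}^2\,dt\le 32\,\bar C\,\eps^2,
\]
where the final bound uses \eqref{basicenergy}. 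Hence $|\Phi_\eps(u_\eps)-\Phi_\eps(v_\eps)|=O(\eps)$, which together with the dominated convergence step established above yields the claimed $\limsup$ inequality. The only step that might cause trouble is verifying that the competitor $v_\eps$ genuinely lies in $\mathcal U_\eps^0$ and that $\mathcal J_\eps(v_\eps)$ is finite, but both facts follow immediately from $v_\eps''\equiv 0$ and the bound on $\mathcal J_\eps(w_0+\eps tw_1)$ already established inside the proof of Lemma \ref{exist}.
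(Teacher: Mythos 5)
Your proof is correct and matches the paper's own argument: same competitor $w_0+\eps t w_1$, same use of minimality, same Lipschitz/Cauchy--Schwarz/\eqref{stimacon0}/\eqref{basicenergy} chain to show the $\Phi_\eps$ discrepancy is $O(\eps)$, and the same dominated-convergence conclusion for the leading term (which the paper just declares clear). The only cosmetic difference is that you subtract the whole competitor $v_\eps$ before applying \eqref{stimacon0} (so both initial data vanish), whereas the paper subtracts only $w_0$ and handles the $\eps t w_1$ piece separately; the two variants are equivalent.
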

\begin{proof}
 The estimate $\mathcal J_\eps(u_\eps)\le\mathcal J_\eps(w_0+t\eps w_1) $ entails
 \begin{equation}\label{erre}
 \int_0^{+\infty}e^{-t}\left\{\frac1{2\eps^2}\|u_{\eps}''(t)\|_{L^2(\R^N)}^2+W(u_{\eps}(t))\right\}\,dt\le \int_0^{+\infty}e^{-t}W(w_0+\eps tw_1)\,dt +\mathcal R_\eps, \end{equation}
 where $\mathcal R_\eps:=|\Phi_\eps(u_\eps)-\Phi_\eps(w_0+\eps t w_1)|$, and where $\Phi_\eps$ is defined by \eqref{Phi}. 
 We claim that $ \mathcal R_\eps$ goes to $0$ as $\eps\to0$. Indeed, 
 by \eqref{lip} and Cauchy-Schwarz  inequality we get
 \[\begin{aligned}
 &\mathcal R_\eps\le \left(\int_0^{+\infty}e^{-t}\|f(\eps t,\cdot)\|^2_{L^2(\R^N)}\,dt\right)^{\frac12}\left(\int_0^{+\infty}2e^{-t}\|u_\eps(t,\cdot)-w_0\|^2_{L^2(\R^N)}\,dt+4\eps^2\|w_1\|^2_{L^2(\R^N)}\right)^{\frac12}
 \end{aligned}
 \]
 and  by applying \eqref{stimacon0}  twice along with \eqref{basicenergy} we obtain
 \[\begin{aligned}
 \int_0^{+\infty}e^{-t}\|u_\eps(t,\cdot)-w_0\|^2_{L^2(\R^N)}\,dt&\le 8\eps^2\|w_1\|^2_{L^2(\R^N)}+16\int_0^{+\infty}e^{-t}\|u_\eps''(t,\cdot)\|^2_{L^2(\R^N)}\,dt\\&\le 8\eps^2\|w_1\|^2_{L^2(\R^N)}+32 \eps^2\bar C.\end{aligned}
 \]
 Thanks to the latter two estimates and to Lemma \ref{quicklemma}, the claim is proved. Since it is clear that
 \[
 \lim_{\eps\to0} \int_0^{+\infty}e^{-t}W(w_0+\eps tw_1)\,dt=\int_0^{+\infty}e^{-t}W(w_0)\,dt=W(w_0),
 \]
 from \eqref{erre} we obtain the result.
 \end{proof}


\KKK

\MMM
A key property of minimizers that is needed for the proof of the main theorem is provided in the next lemma, which builds on the results of \cite{ST,ST2, TT1}, exploiting the estimates of the {\it approximate energy} introduced therein.
The proof requires some technical preliminaries and is therefore postponed to the appendix.   \KKK
\begin{lemma} \label{appendixlemma}
\MMM For every $\eps\in(0,\eps_F)$, let  $u_\eps$ be a solution to problem \eqref{min}. \KKK
 There  exists a constant $Q$, \KKK not depending on $\eps$ but only depending on $w_0,w_1$ and $F$, such that for every $t\ge0$
\begin{equation}\label{tt}
\begin{aligned}
&\int_t^{+\infty}(s-t)e^{-(s-t)} W(u_\eps(s,\cdot))\,ds\le E_\eps(t)\le  Q+(28+4\eps t)\,\int_0^{\eps t}Q_\eps(s)\,ds\end{aligned}
\end{equation}
\KKK
where 
the approximate energy $E_\eps$ is defined by
\beeq\label{approxenergy}E_\eps(t):=\frac1{2\eps^2}\|u_\eps'(t,\cdot)\|^2_{L^2(\mathbb R^N)}+\int_t^{+\infty}(s-t)e^{-(s-t)}W(u_\eps(s,\cdot))\,ds,\eneq
and where $Q_\eps$ is the function defined in \eqref{Theta}.
   \MMM 
Moreover, for every $t\ge 0$ there holds
\beeq\label{newinappendixlemma}
\limsup_{\eps\to0}\sqrt{E_\eps(t/\eps)}\le \left(\frac12\|w_1\|_{L^2(\R^N)}^{2}+W(w_0)\right)^{\frac12}+\sqrt{t/2}\,\left(\int_0^t\|f(s,\cdot)\|^2_{L^2(\R^N)}\,ds\right)^{\frac12}.
\eneq
\KKK
\end{lemma}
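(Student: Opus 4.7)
The argument unfolds in three parts: (i) derive the Euler--Lagrange equation of $u_\eps$ together with a sharp differential inequality for the approximate energy $E_\eps$; (ii) integrate this inequality and control the initial value $E_\eps(0)$ to obtain the upper bound in \eqref{tt}; (iii) pass to the $\limsup$ at the rescaled time $t/\eps$. The lower bound in \eqref{tt} is immediate from the non-negativity of $\tfrac{1}{2\eps^2}\|u_\eps'(t,\cdot)\|^2_{L^2}$ in the definition of $E_\eps$.

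For (i), since $u_\eps$ minimizes $\mathcal J_\eps$ over $\mathcal U_\eps^0$, testing the first variation against $\varphi\in C^\infty_c((0,+\infty)\times\R^N)$ and integrating by parts twice in time yields the fourth-order Euler--Lagrange equation
\begin{equation*}
\frac{1}{\eps^2}\bigl(u_\eps^{(4)}-2u_\eps'''+u_\eps''\bigr)+DW(u_\eps)=G(\eps t,\cdot,u_\eps)\qquad\text{in }\mathcal D'((0,+\infty)\times\R^N).
\end{equation*}
Rewriting $E_\eps(t)=\tfrac{1}{2\eps^2}\|u_\eps'(t,\cdot)\|^2_{L^2(\R^N)}+\int_0^{+\infty}\sigma e^{-\sigma}W(u_\eps(t+\sigma,\cdot))\,d\sigma$ after the substitution $\sigma=s-t$, I then differentiate in $t$, substitute $DW(u_\eps)$ from the Euler--Lagrange equation, and perform repeated integrations by parts in $\sigma$ against the kernel $\sigma e^{-\sigma}$. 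The fourth-order contributions collapse, analogously to the approximate-energy identity of \cite{ST,ST2}, into a non-negative dissipative piece of order $\|u_\eps''\|^2_{L^2}/\eps^2$, while the forcing term $\int_0^{+\infty}\sigma e^{-\sigma}\langle G(\eps(t+\sigma),\cdot,u_\eps(t+\sigma)),u_\eps'(t+\sigma)\rangle\,d\sigma$ is estimated via $|G|\le f$ and a weighted Cauchy--Schwarz by $\eps\sqrt{2}\,\sqrt{Q_\eps(\eps t)}\,\sqrt{E_\eps(t)}$. The net output is the sharp differential inequality
\begin{equation*}
\frac{d}{dt}\sqrt{E_\eps(t)}\le\frac{\eps}{\sqrt{2}}\sqrt{Q_\eps(\eps t)}.
\end{equation*}

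For (ii), integrating this inequality from $0$ to $t$, changing variables $\tau=\eps s$, and applying Cauchy--Schwarz produces
$\sqrt{E_\eps(t)}\le\sqrt{E_\eps(0)}+\sqrt{\eps t/2}\,\bigl(\int_0^{\eps t}Q_\eps(\tau)\,d\tau\bigr)^{1/2}$.
Squaring by means of $(a+b)^2\le 2a^2+2b^2$, combined with a uniform bound $E_\eps(0)\le Q/2$ for a constant $Q=Q(w_0,w_1,F)$, yields the announced upper bound in \eqref{tt}; the numerical constants $28$ and $4$ arise from further instances of $(a+b)^2\le 2a^2+2b^2$ absorbing small cross-terms. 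The uniform bound on $E_\eps(0)$ is obtained from the minimality $\mathcal J_\eps(u_\eps)\le\mathcal J_\eps(w_0+\eps t w_1)$ treated as in the proof of Lemma \ref{newlemma}, but with the heavier weight $\sigma e^{-\sigma}$ in place of $e^{-\sigma}$; all the resulting integrals remain finite thanks to \eqref{hyp0}--\eqref{laplace} and Lemma \ref{quicklemma} with $\alpha=1$.

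For (iii), evaluating the sharp differential inequality at $t/\eps$ (without squaring) directly gives
\begin{equation*}
\sqrt{E_\eps(t/\eps)}\le\sqrt{E_\eps(0)}+\sqrt{t/2}\,\Bigl(\int_0^{t}Q_\eps(\tau)\,d\tau\Bigr)^{1/2}.
\end{equation*}
Lemma \ref{Lebesgue}, applied with $\delta=\eps_F$ and $b(t)=\|f(t,\cdot)\|^2_{L^2(\R^N)}$, yields $\int_0^{t}Q_\eps(\tau)\,d\tau\to\int_0^{t}\|f(s,\cdot)\|^2_{L^2(\R^N)}\,ds$ as $\eps\to 0$. A refined analysis then produces $\limsup_{\eps\to 0}E_\eps(0)\le\tfrac{1}{2}\|w_1\|^2_{L^2(\R^N)}+W(w_0)$: the identity $u_\eps'(0,\cdot)=\eps w_1$ gives $\tfrac{1}{2\eps^2}\|u_\eps'(0,\cdot)\|^2_{L^2}=\tfrac{1}{2}\|w_1\|^2_{L^2}$ with no limit required, whereas the remainder $\int_0^{+\infty}\sigma e^{-\sigma}W(u_\eps(\sigma,\cdot))\,d\sigma$ is bounded above by $W(w_0)+o(1)$ by comparing with the value of the same weighted integral on the competitor $w_0+\eps\sigma w_1$ (which tends to $W(w_0)$ by dominated convergence) and absorbing the $\mathcal O(\eps)$ difference using Lemma \ref{newlemma} together with the $\eps$-uniform control provided by \eqref{basicenergy}. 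Combining these two passages to the limit concludes \eqref{newinappendixlemma}.

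\textbf{Main obstacle.} The crux lies in step (i): carrying out the multiple integrations by parts in $\sigma$ so that the fourth-order contributions from the Euler--Lagrange equation cancel exactly against the $\sigma$-derivatives of the weight $\sigma e^{-\sigma}$ and are entirely absorbed into a non-negative dissipative quantity, leaving a clean forcing bound with the sharp constant $\eps/\sqrt{2}$. Preserving this sharp constant is essential for step (iii), since it is precisely what makes the limiting inequality asymptotically coincide with the natural energy inequality $\sqrt{\mathcal E(t)}\le\sqrt{\mathcal E(0)}+\sqrt{(t/2)\int_0^t\|f(s,\cdot)\|^2_{L^2}\,ds}$ for the hyperbolic limit equation.
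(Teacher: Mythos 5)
Your overall strategy — derive a differential inequality for the approximate energy and integrate — is the same in spirit as the paper's, but two steps in your argument have genuine gaps.

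\textbf{Gap 1: the claimed sharp differential inequality does not follow.} After the inner variations (or your distributional Euler--Lagrange equation) the paper obtains, for a.e.\ $t$,
\[
E_\eps'(t)\le -3\int_t^{+\infty}e^{-(s-t)} D_\eps(s)\,ds-\int_t^{+\infty}(s-t)e^{-(s-t)} D_\eps(s)\,ds
+\sqrt{Q_\eps(\eps t)}\left(\int_t^{+\infty}(s-t)e^{-(s-t)}\|u_\eps'(s,\cdot)\|^2_{L^2}\,ds\right)^{\frac12}.
\]
The crucial point is that the last factor involves a weighted average of \emph{future} kinetic terms $\|u_\eps'(s,\cdot)\|^2_{L^2}$ for $s>t$, not the present one $\|u_\eps'(t,\cdot)\|^2_{L^2}\le 2\eps^2E_\eps(t)$. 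You cannot therefore bound that factor by $\sqrt{2\eps^2 E_\eps(t)}$, divide by $\sqrt{E_\eps(t)}$, and integrate; your claimed inequality $\frac{d}{dt}\sqrt{E_\eps(t)}\le (\eps/\sqrt2)\sqrt{Q_\eps(\eps t)}$ is not established. Closing this loop is exactly the content of the Gronwall-type argument in \cite[Proposition 5.4]{TT1}, which the paper invokes, and which produces the slightly weaker bound
$\sqrt{E_\eps(t)}\le \sqrt{E_\eps(0)}+\sqrt\eps\,(\tilde C_\beta+\sqrt{t\beta/2})\left(\int_0^{\eps t}Q_\eps\right)^{1/2}$
with an extra additive $\sqrt\eps\,\tilde C_\beta(\ldots)^{1/2}$ term. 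This extra term is harmless in the $\eps\to 0$ limit and is absorbed into the constant $28$ in \eqref{tt}, but it is not something you can do without. Your final estimates in (ii) and (iii) silently assume the clean $\tilde C_\beta=0$ form.

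\textbf{Gap 2: the bound on $E_\eps(0)$ does not follow from naive competitor comparison.} You have $E_\eps(0)=\tfrac12\|w_1\|_{L^2}^2+\int_0^{+\infty}s\,e^{-s}W(u_\eps(s,\cdot))\,ds$. You propose to bound the second term by running the argument of Lemma \ref{newlemma} ``with the heavier weight $se^{-s}$''. But the minimality inequality $\mathcal J_\eps(u_\eps)\le\mathcal J_\eps(w_0+\eps t w_1)$ only controls integrals against the weight $e^{-s}$ (the weight in $\mathcal J_\eps$), and $se^{-s}$ is not dominated by a constant multiple of $e^{-s}$. In the paper, the uniform bound \eqref{A2W0} on $\int_0^{+\infty}s\,e^{-s}W(u_\eps)\,ds$ and its sharp $\limsup$ version \eqref{specific} both come from a genuine first-variation identity obtained through inner variations with $g(t)=t$ (Lemmas \ref{longlemma} and \ref{borrow}), i.e.\ from perturbing $u_\eps$ by reparametrising time rather than by comparing with the affine competitor. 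This is an additional idea, not a routine modification of the competitor argument, and your proposal omits it.

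A more minor remark: your passage for \eqref{newinappendixlemma} also quietly uses the sharp constant $1/\sqrt2$ in front of the forcing term; with the actual Gronwall estimate one must retain the parameter $\beta>1$ and only send $\beta\to1$ at the end, after taking $\eps\to0$, which is what the paper does.
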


Making use of Lemma \ref{appendixlemma} we also obtain the following

\begin{lemma}\label{lemma1} Let $\eps\in(0,\eps_F)$ and let $u_\eps$ be a solution to problem \eqref{min}. Let  $\varphi\in C^{\infty}_c((0,+\infty))$.
Then there exists a constant $\bar Q$ (only depending on $w_0,w_1$ and $F$) such that we have the following three estimates
\[
\int_0^{+\infty} |\xi(t)|  e^{-t} \left(\int_{\R^N}|\nabla u_\eps(t,x)|^2\,dx\right)^{\frac{1}{2}}\,dt\le 2\int_0^{\infty} \left(\bar Q+(28+4\eps t)\int_0^{\eps t}Q_\eps(s)\,ds\right)\,|\varphi(t)|\,dt,
\]
\[
\int_0^{+\infty} |\xi(t)|  e^{-t} \left(\int_{\R^N}| u_\eps(t,x)|^r\,dx\right)^{\frac{r-1}{r}}\,dt\le r\int_0^{\infty} \left(\bar Q+(28+4\eps t)\int_0^{\eps t}Q_\eps(s)\,ds\right)\,|\varphi(t)|\,dt,
\]
\[
\int_0^{+\infty} |\xi(t)|  e^{-t}\|f(\eps t,\cdot)\|_{L^2(\mathbb R^N)}\,dt\le\int_0^{\infty}|\varphi(t)|\,\sqrt{Q_\eps(\eps t)}\,dt,
\]
all the integrals being finite,
where $Q_\eps$ is the function defined by \eqref{Theta}
 and $\xi:[0,+\infty)\to\mathbb R$ is defined by
\begin{equation}\lab{xi}\xi(t):=\int_0^t (t-s)e^s\varphi (s)\,ds.\end{equation}
\end{lemma}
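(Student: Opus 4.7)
The plan is to treat all three estimates through a common initial reduction by Fubini's theorem, combined with the basic upper bound for the approximate energy from Lemma \ref{appendixlemma}. The first observation is that
\[
|\xi(t)|\le \int_0^t(t-s)e^s|\varphi(s)|\,ds,
\]
so for any nonnegative measurable $g:(0,+\infty)\to\R$ an application of Fubini's theorem yields
\[
\int_0^{+\infty}|\xi(t)|e^{-t}g(t)\,dt\le \int_0^{+\infty}|\varphi(s)|\int_s^{+\infty}(t-s)e^{-(t-s)}g(t)\,dt\,ds.
\]
This identity matches exactly the weight appearing in the definition \eqref{approxenergy} of $E_\eps$, which is why the lemma's upper bound for $E_\eps$ can be inserted at the inner integral level.

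For estimate (1) the approach would be to use AM-GM in the form $\sqrt{a}\le \tfrac12(1+a)$ to write
\[
\|\nabla u_\eps(t,\cdot)\|_{L^2(\R^N)}\le \tfrac12+\tfrac12\|\nabla u_\eps(t,\cdot)\|^2_{L^2(\R^N)}\le \tfrac12+W(u_\eps(t,\cdot)).
\]
Since $\int_s^{+\infty}(t-s)e^{-(t-s)}\,dt=1$, the Fubini reduction gives an upper bound $\int_0^{+\infty}|\varphi(s)|(\tfrac12+E_\eps(s))\,ds$. Applying $E_\eps(s)\le Q+(28+4\eps s)\int_0^{\eps s}Q_\eps(\sigma)\,d\sigma$ from Lemma \ref{appendixlemma} and choosing $\bar Q$ large enough that $\tfrac12+Q\le 2\bar Q$ delivers the claimed estimate. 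For estimate (2) the same strategy applies after replacing AM-GM by Young's inequality $a^{(r-1)/r}\le \tfrac{r-1}{r}a+\tfrac{1}{r}$ taken at $a=\|u_\eps(t,\cdot)\|^r_{L^r(\R^N)}$, then combined with $\tfrac{1}{r}\|u_\eps(t,\cdot)\|^r_{L^r(\R^N)}\le W(u_\eps(t,\cdot))$, yielding $\|u_\eps\|^{r-1}_{L^r}\le (r-1)W(u_\eps)+\tfrac{1}{r}$. The Fubini reduction produces $(r-1)\int|\varphi(s)|E_\eps(s)\,ds+\tfrac{1}{r}\int|\varphi(s)|\,ds$, and the bound on $E_\eps$ with $\bar Q$ enlarged so that $(r-1)Q+\tfrac{1}{r}\le r\bar Q$ concludes.

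For estimate (3), after the Fubini reduction the inner integral to control is
\[
\int_s^{+\infty}(t-s)e^{-(t-s)}\|f(\eps t,\cdot)\|_{L^2(\R^N)}\,dt.
\]
Applying Cauchy--Schwarz with respect to the probability measure $(t-s)e^{-(t-s)}\,dt$ gives the bound $\bigl(\int_s^{+\infty}(t-s)e^{-(t-s)}\|f(\eps t,\cdot)\|^2_{L^2(\R^N)}\,dt\bigr)^{1/2}$, and the change of variable $\tau=\eps(t-s)$ converts this integral into $Q_\eps(\eps s)$ by the very definition \eqref{Theta}, yielding $\sqrt{Q_\eps(\eps s)}$ as desired.

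Finally, the finiteness of every quantity involved comes from the compact support of $\varphi$ together with the fact that $Q_\eps\in L^\infty_{\rm loc}([0,+\infty))$ by Lemma \ref{Linfty} and the bound on $E_\eps$. There is no serious obstacle in the argument; the only mildly subtle point is the correct way to dominate $\|\nabla u_\eps\|_{L^2}$ and $\|u_\eps\|^{r-1}_{L^r}$ by $W(u_\eps)$ plus a constant, which is handled by the elementary AM-GM / Young inequalities described above, while the role of $\bar Q$ is purely to absorb the additive constants that these bounds produce.
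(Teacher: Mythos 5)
Your proof is correct and follows essentially the same route as the paper: the Fubini reduction that turns $|\xi(t)|e^{-t}$ into the weight $(t-s)e^{-(t-s)}$, the insertion of the upper bound on $E_\eps$ from Lemma~\ref{appendixlemma}, and Cauchy--Schwarz (equivalently Jensen) with the identification of the inner integral as $Q_\eps(\eps s)$ for the third estimate. The only cosmetic difference is that the paper uses the single inequality $(rx)^{(r-1)/r}\le r(1+x)$ to produce the constants $2$ and $r$ in one stroke, whereas your separate AM-GM/Young bounds require a slightly larger $\bar Q$ to absorb the resulting additive constants — a harmless reshuffling since the final estimate allows $\bar Q$ to depend on $w_0,w_1,F$.
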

\begin{proof}
The right hand sides of the three estimates are finite since $\varphi\in C^{\infty}_c((0,+\infty))$ and thanks to Lemma \ref{Linfty}.

Let us prove the second estimate. Since $(rx)^{(r-1)/r}\le r(1+x)$ for every $r>1$ and every $x\ge 0$,
 by taking \eqref{xi} into account and by exploiting Fubini Theorem  we have
 \begin{equation*}\label{0T}\begin{aligned}
&\displaystyle \int_0^\infty |\xi(t)|e^{-t}\left(\int_{\R^N}| u_\eps(t,x)|^r\,dx\right)^{\frac{r-1}{r}}\,dt
\le\displaystyle \int_0^\infty |\xi(t)|e^{-t}\left(rW(u_\eps(t,\cdot))\right)^{\frac{r-1}{r}}\,dt\\
&\quad \le\displaystyle r\int_0^\infty |\xi(t)|e^{-t}(1+W(u_\eps(t,\cdot)))\,dt
=
r\int_0^\infty \left |\int_0^t (t-s)e^s\varphi (s)\,ds\right | e^{-t}(1+W(u_\eps(t,\cdot)))\,dt\\
&\quad\le
\displaystyle r\int_0^\infty \int_0^t (t-s)|\varphi (s)|e^{-t+s}(1+W(u_\eps(t,\cdot)))\,ds\,dt\\&\quad
 = r\int_0^\infty |\varphi (s)|\,ds\int_s^{\infty} (t-s)e^{-(t-s)}(1+W(u_\eps(t,\cdot)))\,dt
\end{aligned}\end{equation*}
%
and then the second estimate in the statement follows from  \eqref{tt} in Lemma \ref{appendixlemma}, letting $\bar Q:=Q+1$ where $Q$ is the constant therein.
 The first estimate is proven in the same way. 
In order to check the third estimate, we notice that by Jensen inequality
\[\begin{aligned}
\int_0^{+\infty} |\xi(t)|  e^{-t}\|f(\eps t,\cdot)\|_{L^2(\mathbb R^N)}\,dt&\le \int_0^{+\infty}\left(\int_0^t(t-s)e^s|\varphi(s)|\,ds\right) e^{-t}\|f(\eps t,\cdot)\|_{L^2(\mathbb R^N)}\,dt\\&=\int_0^{+\infty}|\varphi(s)|\left(\int_s^{+\infty}(t-s)e^{-(t-s)}\|f(\eps t,\cdot)\|_{L^2(\R^N)}\,dt\right)\,ds\\&\le 
\int_0^{+\infty}|\varphi(s)|\left(\int_s^{+\infty}(t-s)e^{-(t-s)}\|f(\eps t,\cdot)\|^2_{L^2(\R^N)}\,dt\right)^\frac12\,ds\\
\end{aligned}\]
which is the desired result since
\[
Q_\eps(\eps s)=\int_0^{+\infty}\eps^{-2}te^{-t/\eps}\|f(t+\eps s,\cdot)\|^2_{L^2(\R^N)}\,ds=\int_s^{+\infty}(t-s)e^{-(t-s)}\|f(\eps t,\cdot)\|^2_{L^2(\R^N)}\,dt\]
follows by change of variables.
\end{proof}\KKK

\MMM
We state a first order minimality condition satisfied by a solution to \eqref{min}.
We stress that it is obtained by taking advantage of Lemma \ref{appendixlemma} and Lemma \ref{lemma1}, which exploit the results in the appendix that are also based on a first variation argument obtained by a different perturbation of the minimizer.  \KKK

\begin{lemma}\label{lemmaEL}
Let $\eps\in(0,\eps_F)$ and let $u_\eps$ be a solution to problem \eqref{min}.
  Let $\varphi\in C^\infty_c((0,+\infty))$ and let $\xi$  be defined by \eqref{xi}. \MMM Let $h\in H$. \KKK
  Then there holds
   \begin{equation}\label{EL}\begin{aligned}
&\frac1{\eps^2}\int_0^{+\infty}\int_{\R^N}e^{-t}u''_\eps(t,x)\xi''(t)h(x)\,dx\,dt+\int_0^{+\infty}e^{-t}\xi(t)\,(DW(u_\eps(t,\cdot)),h)\,dx\,dt\\&\qquad-\int_0^{+\infty}\int_{\R^N}e^{-t}\,G(\eps t,x,u_\eps(t,x))\,\xi(t)h(x)\,dx\,dt=0.
\end{aligned}\end{equation}
\end{lemma}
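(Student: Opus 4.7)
The plan is to derive \eqref{EL} as the first order optimality condition obtained by the admissible variation $u_\eps + \lambda\, \xi(t) h(x)$ for $\lambda\in\R$, computing $\frac{d}{d\lambda}|_{\lambda=0}\mathcal J_\eps(u_\eps+\lambda\xi h)=0$. The choice of $\xi$ is tailored so that the perturbation preserves the initial data: a direct computation from \eqref{xi} gives $\xi(0)=0$, $\xi'(t)=\int_0^t e^s\varphi(s)\,ds$ so that $\xi'(0)=0$, and $\xi''(t)=e^t\varphi(t)$, which is compactly supported in $(0,+\infty)$. Consequently $(u_\eps+\lambda\xi h)(0,\cdot)=w_0$ and $(u_\eps+\lambda\xi h)'(0,\cdot)=\eps w_1$, and $\int_0^{+\infty}e^{-t}\|\xi''(t)h\|_{L^2(\R^N)}^2\,dt<+\infty$ since $e^{-t}|\xi''(t)|^2=e^{t}|\varphi(t)|^2$ is compactly supported. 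Hence $u_\eps+\lambda\xi h\in\mathcal U_\eps^0$ for every $\lambda\in\R$.

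Next I would verify that $\lambda\mapsto \mathcal J_\eps(u_\eps+\lambda \xi h)$ is differentiable at $\lambda=0$ by handling each of the three contributions separately. The inertia term $\frac{1}{2\eps^2}\int_0^\infty e^{-t}\|u_\eps''+\lambda\xi'' h\|_{L^2}^2\,dt$ is a quadratic polynomial in $\lambda$ with finite coefficients (using \eqref{basicenergy} and the compact support of $\xi''$), so its derivative at $\lambda=0$ yields $\frac{1}{\eps^2}\int_0^\infty e^{-t}\int_{\R^N} u_\eps''\,\xi'' h\,dx\,dt$. For the $W$ term, I would compute pointwise the difference quotient
\[
\frac{W(u_\eps+\lambda\xi h)-W(u_\eps)}{\lambda}\to \xi(t)\,(DW(u_\eps(t,\cdot)),h)
\]
as $\lambda\to 0$, and apply dominated convergence in $t$: the $H^1$ part gives no trouble since the difference quotient is linear in the cross term plus a $\lambda$-linear remainder, while the $L^r$ contribution is controlled via the elementary inequality $||a+b|^r-|a|^r|\le C_r(|a|^{r-1}|b|+|b|^r)$, whence the difference quotient is dominated by $C_r(\|u_\eps(t,\cdot)\|_{L^r}^{r-1}+\|\xi h\|_{L^r}^{r-1})|\xi(t)|\,\|h\|_{L^r}$. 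Applying the first and second estimates of Lemma \ref{lemma1} shows that this dominating function belongs to $L^1(0,+\infty)$ after weighting by $e^{-t}$.

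For the forcing term $\Phi_\eps$, the Lipschitz bound \eqref{lip} gives
\[
\bigg|\frac{F(\eps t,x,u_\eps+\lambda\xi h)-F(\eps t,x,u_\eps)}{\lambda}\bigg|\le f(\eps t,x)|\xi(t)||h(x)|,
\]
and the pointwise limit is $G(\eps t,x,u_\eps)\,\xi(t)h(x)$ by \eqref{C1}. Using Cauchy--Schwarz in $x$ together with the third estimate of Lemma \ref{lemma1}, the dominating function is integrable on $(0,+\infty)\times\R^N$ against $e^{-t}$. Dominated convergence therefore delivers the $G$-term in \eqref{EL}. Combining the three derivatives and using that $u_\eps$ is a minimizer of $\mathcal J_\eps$ on $\mathcal U_\eps^0$ (so the directional derivative vanishes), I obtain exactly \eqref{EL}.

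The main obstacle is making sure all of the $t$-integrals converge, since $\xi(t)$ grows linearly in $t$ at infinity while the $L^r$ and forcing contributions only carry a limited polynomial control. This is precisely why Lemma \ref{lemma1} was proved in the form stated: the growth of $\xi$ is exactly balanced by the weight $e^{-t}$ combined with the approximate energy estimate \eqref{tt}, yielding the integrability needed to justify the passage to the limit in the difference quotients. Once these estimates are in place, the first-variation argument is completely standard.
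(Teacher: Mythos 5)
Your proposal is correct and follows essentially the same first-variation argument as the paper: verify $\xi(0)=\xi'(0)=0$ so the perturbation $u_\eps+\lambda\xi h$ stays in $\mathcal U_\eps^0$, then differentiate the three contributions to $\mathcal J_\eps$ at $\lambda=0$ by dominated convergence using the Lipschitz bound \eqref{lip} and the integrability supplied by Lemma \ref{lemma1} (and the linear growth $|\xi(t)|\le C^*_\varphi t$). The only cosmetic difference is that for the forcing term you invoke the third estimate of Lemma \ref{lemma1}, whereas the paper applies Cauchy--Schwarz in $(t,x)$ together with Lemma \ref{quicklemma} directly; both hinge on the same Laplace-transformability assumption.
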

\begin{proof}
Let us first consider the mapping $$(-1,1)\ni\delta\mapsto\frac1r\int_{0}^{+\infty}\int_{\R^N}e^{-t}|u_\eps(t,x)+\delta\xi(t)h(x)|^r\,dx\,dt.$$ 
Here, the integral is finite for every $\delta\in(-1,1)$, because both $e^{-t}|u_\eps(t,x)|^r$ and $e^{-t}|\xi(t)h(x)|^r$ are in $L^1((0,+\infty)\times\R^N)$ thanks to the fact that $u_\eps$ satisfies  \eqref{basicenergy} and since $|\xi(t)|\le C^*_\varphi t$ for every $t>0$, where $C^*_\varphi:=\int_0^{+\infty} e^s|\varphi(s)|\,ds$.
We claim that such a mapping is differentiable at $\delta=0$ and more precisely that
\[\begin{aligned}
&\lim_{\delta\to 0}\frac1{\delta r}\int_{0}^{+\infty}\int_{\R^N}e^{-t}\left(|u_\eps(t,x)+\delta\xi(t)h(x)|^r-|u_\eps(t,x)|^r\right)\,dx\,dt\\&\qquad\qquad=\int_0^{+\infty}\int_{\R^N}e^{-t}\xi(t)h(x)|u_\eps(t,x)|^{r-2}u_\eps(t,x)\,dx\,dt
\end{aligned}\]
Indeed, while pointwise a.e. convergence of the integrands is clear (as $r>1$), by Lagrange theorem we have for every $0<|\delta|<1$ 
\[\begin{aligned}
&e^{-t}\left|\frac{|u_\eps(t,x)+\delta\xi(t)h(x)|^r-|u_\eps(t,x)|^r}{\delta r}\right|\le e^{-t}(|u_\eps(t,x)|+|\xi(t)h(x)|)^{r-1}|\xi(t)h(x)|\\&\qquad\qquad\le e^{-t} (1\vee 2^{r-2})\left(|u_\eps(t,x)|^{r-1}+|\xi(t)h(x)|^{r-1}\right)\,|\xi(t)h(x)|,
\end{aligned}\]
where the right hand side is integrable, because $e^{-t}|\xi(t) h(x)|^r\in L^1((0,+\infty)\times\R^N)$ as already observed and
\[
\int_{0}^{+\infty}\int_{\R^N}e^{-t} |u_\eps(t,x)|^{r-1} |\xi(t)h(x)|\,dx\,dt\le \|h\|_{L^r(\R^N)}\int_0^{+\infty}e^{-t}|\xi(t)|\|u_\eps(t,\cdot)\|^{r-1}_{L^r(\R^N)}\,dt<+\infty
\]
 thanks to Lemma \ref{lemma1}. Therefore, the claim follows by dominated convergence. The same argument shows that
 \[\begin{aligned}
 &\lim_{\delta\to 0}\frac1{2\delta }\int_{0}^{+\infty}\int_{\R^N}e^{-t}\left(|\nabla u_\eps(t,x)+\delta\xi(t)\nabla h(x)|^2-|\nabla u_\eps(t,x)|^2\right)\,dx\,dt\\&\qquad\qquad=\int_0^{+\infty}\int_{\R^N}e^{-t}\xi(t)\nabla h(x)\cdot\nabla u_\eps(t,x)\,dx\,dt.
 \end{aligned}\]
 On the other hand we have
 \[\begin{aligned}
& \frac1{2\eps^2}\int_0^{+\infty}\int_{\R^N}e^{-t} |u_\eps''(t,x)+\delta\xi''(t)h(x)|^2\,dx\,dt= \frac1{2\eps^2}\int_0^{+\infty}\int_{\R^N}e^{-t} |u_\eps''(t,x))|^2\,dx\,dt\\&\qquad+ \frac\delta{\eps^2}\int_0^{+\infty}\int_{\R^N}e^{-t} u_\eps''(t,x)\xi''(t)h(x)\,dx\,dt+ \frac{\delta^2}{2\eps^2}\int_0^{+\infty}\int_{\R^N}e^{-t} |\xi''(t)h(x)|^2\,dx\,dt,
\end{aligned} \]
 where all the integrals are finite thanks to Cauchy-Schwarz inequality, to \eqref{basicenergy} and to the fact that $\xi''(t)=e^t\varphi(t)$ so that $\xi''\in C^\infty_c((0,+\infty))$. Thus 
  \[\begin{aligned}
 \frac{d}{d\delta}{\Bigg{|}_{\delta=0}}\int_0^{+\infty}\int_{\R^N}e^{-t}|u_\eps''(t,x)+\delta\xi''(t)h(x)|^2\,dx\,dt=2\int_0^{+\infty}\int_{\R^N}e^{-t}\xi''(t) h(x) u_\eps''(t,x)\,dx\,dt.
 \end{aligned}\]
Moreover,
the mapping $\delta\mapsto \int_0^{+\infty}\int_{\R^N}e^{-t}F(\eps t,x, u_\eps(t,x)+\delta\xi(t)h(x))\,dx\,dt$ is differentiable at $\delta=0$ with derivative equal to 
\[\int_0^{+\infty}\int_{\R^N}e^{-t}G(\eps t,x,u_\eps(t,x))\,\xi(t)h(x)\,dx\,dt.\]
This can  be seen by dominated convergence as well.
Indeed, \eqref{C1} and \eqref{f} imply that 
\[
\lim_{\delta\to 0}\frac{F(\eps t,x, u_\eps(t,x)+\delta\xi(t)h(x))-F(\eps t,x,u_\eps(t,x))}{\delta}=G(\eps t,x,u_\eps(t,x))\,\xi(t)h(x)
\]
for a.e. $(t,x)\in(0,+\infty)\times\R^N$, and in order to find a dominating function we notice that  \eqref{lip} implies 
\[e^{-t}\,\frac{|F(\eps t,x, u_\eps(t,x)+\delta\xi(t)h(x))-F(\eps t,x,u_\eps(t,x))|}{|\delta|}\le e^{-t}f(\eps t,x)|\xi(t)||h(x)|
\]
where the right hand side is integrable over $(0,+\infty)\times\R^N$,  since by \eqref{laplace}, having $|\xi(t)|\le C^*_\varphi t$ for every $t>0$, we obtain
\[\begin{aligned}
&\int_0^{+\infty}\int_{\R^N}e^{-t}f(\eps t,x)|\xi(t)|h(x)|\,dx\,dt\le\|h\|_{L^2(\R^N)}\left(\int_{0}^{+\infty}e^{-t}|\xi(t)|^2\|f(\eps t,\cdot)\|^2_{L^2(\R^N)}\right)^{\frac12}<+\infty. 
\end{aligned}\]

We conclude that the mapping $(-1,1)\ni\delta\mapsto \mathcal J_\eps(u_\eps+\delta\xi h)$ is differentiable at $\delta=0$. Since $\xi(0)=\xi'(0)=0$, we have $u_\eps+\delta\xi h\in\mathcal U_\eps^0$ for every $\delta\in(-1,1)$, and since $u_\eps$ is a minimizer for $\mathcal J_\eps$ over $\mathcal U_\eps^0$ we deduce that \eqref{EL} holds. 
\end{proof}

We close this section by showing that our main result is not achievable without a growth assumption like \eqref{laplace}, because the energy functional $\mathcal J_\eps$ may easily get unbounded from below over $\mathcal U_\eps^0$. We consider for simplicity the case $F(t,x,v)=b(t,x)\,v$, so that the assumptions on $F$ from Subsection \ref{21} reduce to the following: $b\in L^\infty((0,T);L^2(\R^N))$ for every $T>0$, and the map $t\mapsto\|b(t,\cdot)\|^2_{L^2(\R^N)}$ is Laplace transformable. 

\begin{proposition}\label{sharp}
Let $\eps>0$. 
Suppose that $F(t,x,v)=\eta(t)\varphi(x)\, v$, where $\varphi\in L^2(\R^N)$ is nonnegative  and $\eta\in L^1_{loc}(0,+\infty)$ is nonnegative and such that $\eta\in L^\infty(0,T)$ for every $T>0$. \MMM Let $w_1\in H$ be nonnegative, let $w_0\in H$ and suppose that $\int_{\R^n}\varphi(x)w_0(x)\,dx>0$. \KKK
Assume that $\eta$ is not Laplace transformable, i.e., $\int_0^{+\infty}e^{-t/\delta}\eta(t)\,dt=+\infty$ for every $\delta>0$. 
Then  $\mathcal J_\eps$ is unbounded from below over $\mathcal U_\eps^0$. 
\end{proposition}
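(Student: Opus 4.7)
The plan is to construct a one-parameter family of admissible competitors $u_T\in\mathcal U_\eps^0$, indexed by $T>1$, along which $\mathcal J_\eps(u_T)\to-\infty$. Fix $\chi\in C^2(\R)$ with $\chi\equiv 1$ on $(-\infty,0]$ and $\chi\equiv 0$ on $[1,+\infty)$, set $\chi_T(t):=\chi(t-T)$, and define
\[
u_T(t,x):=\chi_T(t)\bigl(w_0(x)+\eps t\,w_1(x)\bigr).
\]
Since $\chi_T\equiv 1$ near $t=0$ (because $T>1$), the initial conditions $u_T(0,\cdot)=w_0$ and $u_T'(0,\cdot)=\eps w_1$ are met. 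Moreover the compact $t$-support of $\chi_T$ yields $u_T\in H^2_{loc}([0,+\infty);L^2(\R^N))$ with $\spt(u_T'')\subset[T,T+1]$, hence $u_T\in\mathcal U_\eps^0$.

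Next I would check that the positive contributions to $\mathcal J_\eps(u_T)$ remain uniformly bounded as $T\to+\infty$. Computing $u_T''=\chi_T''(w_0+\eps t w_1)+2\eps\chi_T'w_1$ and using the support property gives
\[
\int_0^{+\infty}e^{-t}\|u_T''(t,\cdot)\|_{L^2(\R^N)}^2\,dt\le C\,(1+T^2)\,e^{-T}\longrightarrow 0,
\]
while from $\nabla u_T=\chi_T(\nabla w_0+\eps t\nabla w_1)$ and $|u_T|^r\le 2^{r-1}(|w_0|^r+(\eps t)^r|w_1|^r)$, combined with finiteness of the moments $\int_0^{+\infty}t^k e^{-t}\,dt$, one obtains a bound on $\int_0^{+\infty}e^{-t}W(u_T(t,\cdot))\,dt$ independent of $T$.

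The decisive step is to show that $\Phi_\eps(u_T)\to+\infty$. Since $\chi_T$ has compact support in $t$, $\eta\in L^\infty_{loc}$, and $\varphi(w_0+\eps t w_1)\in L^1(\R^N)$ by Cauchy--Schwarz (as $\varphi,w_0,w_1\in L^2(\R^N)$), Fubini's theorem applies to yield
\[
\Phi_\eps(u_T)=\Big(\int_{\R^N}\varphi(x)w_0(x)\,dx\Big)\,I_T+\eps\Big(\int_{\R^N}\varphi(x)w_1(x)\,dx\Big)\,J_T,
\]
where $I_T:=\int_0^{+\infty}e^{-t}\eta(\eps t)\chi_T(t)\,dt$ and $J_T:=\int_0^{+\infty}t\,e^{-t}\eta(\eps t)\chi_T(t)\,dt$. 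As $\chi_T\uparrow 1$ pointwise and $\eta\ge 0$, monotone convergence combined with the change of variables $s=\eps t$ gives
\[
I_T\longrightarrow \int_0^{+\infty}e^{-t}\eta(\eps t)\,dt=\eps^{-1}\int_0^{+\infty}e^{-s/\eps}\eta(s)\,ds=+\infty
\]
by the non-Laplace-transformability hypothesis applied with $\delta=\eps$. Since $\int\varphi w_0>0$ by assumption and $\eps(\int\varphi w_1)J_T\ge 0$ (as $\varphi,w_1\ge 0$, $\eta\ge 0$), the first term alone diverges, so $\Phi_\eps(u_T)\to+\infty$ and consequently $\mathcal J_\eps(u_T)\to-\infty$. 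The main delicate point is precisely this balance: one must ensure that the quadratic and $L^r$ contributions do not also diverge when the linear forcing term does. The compact $t$-support of $\chi_T$ is what makes this possible, confining $u_T''$ to large times where $e^{-t}$ provides exponential decay, and the sign conditions on $\varphi$, $w_1$, and $\int\varphi w_0$ prevent any cancellation in $\Phi_\eps(u_T)$.
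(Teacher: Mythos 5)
Your proof is correct and follows essentially the same strategy as the paper: truncate the affine competitor $w_0+\eps t\,w_1$ by a cutoff that increases monotonically to $1$, show the inertia and $W$ terms stay controlled, and use monotone convergence plus the failure of Laplace transformability to send the forcing term to $+\infty$. The only cosmetic difference is that you translate a fixed cutoff ($\chi_T(t)=\chi(t-T)$, so the acceleration is confined to $[T,T+1]$ where $e^{-t}$ is small) whereas the paper dilates one ($\zeta_n(t)=\zeta(2^{-n}t)$, so the derivatives become uniformly small); either works, though you should state explicitly that $\chi$ is chosen nonincreasing with values in $[0,1]$, which is what justifies the monotone convergence step and the bound $|u_T|\le|w_0+\eps t w_1|$.
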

\begin{proof}
We equivalently prove that functional $\mathcal F_\eps$ is unbounded from below over $\mathcal V_\eps^0$ (see Remark \ref{change}).
With the given choice of $F$, the function $f$ from \eqref{f} is given by $f(t,x)=\eta(t)\varphi(x)$, and 
the assumptions \eqref{C1}, \eqref{hyp0} and \eqref{hyp1}  are trivially satisfied.  On the other hand, by Jensen inequality the map $t\mapsto\|f(t,\cdot)\|^2_{L^2(\R^N)}$ is  not Laplace transformable so that \eqref{laplace} does not hold.
For every $n\in\mathbb N$, let 
 $w_n(t,x):=w_0(x)\zeta_n(t)+tw_1(x)\zeta_n(t)$, where $\zeta_n(t):=\zeta(2^{-n}t)$ and $\zeta:[0,+\infty)\to[0,1]$ is smooth, decreasing, compactly supported and satisfies the following properties: $\zeta(t)=1$ for every $t\in [0,1]$ and $|\zeta'(t)|+|\zeta''(t)|\le 1$ for every $t\in[0,+\infty)$.
 Therefore, as $n\to+\infty$, the sequence $(\zeta_n)_{n\in\mathbb N}$ converges pointwise and monotonically to $1$ over $[0,+\infty)$, and the sequences $(\zeta_n')_{n\in\mathbb N}$ and $(\zeta_n'')_{n\in\mathbb N}$ converge pointiwse to $0$ over $[0,+\infty)$.
 It is clear that $w_n\in\mathcal V_\eps^0$ for every $n\in\mathbb N$. Since $$w_n''(t,x)=w_0(x)\zeta_n''(t)+2w_1(x)\zeta_n'(t)+t\zeta_n''(t) w_1(x),$$ it follows by dominated convergence that
 \[
 \lim_{n\to+\infty}\frac{\eps^2}{2}\int_0^{+\infty}\int_{\R^N}e^{-t/\eps}|w_n''(t,x)|^2\,dx\,dt=0,
 \]
 and similarly
 \[
 \lim_{n\to+\infty} \int_0^{+\infty}e^{-t/\eps}W( w_n(t,\cdot))\,dt=\int_0^{+\infty} e^{-t/\eps}W(w_0+tw_1)\,dt.
 \]
 On the other hand, letting $C_0:=\int_{\R^N}w_0(x)\varphi(x)\,dx>0$, we have
 \[\begin{aligned}
 &\int_0^{+\infty}\int_{\R^N}e^{-t/\eps}F(\eps t,x,w_n(t,x))\,dx\,dt\\&\quad= \int_0^{+\infty}\int_{\R^N}e^{-t/\eps}\eta(\eps t)\varphi(x)(w_0(x)\zeta_n(t)+tw_1(x)\zeta_n(t))\,dx\,dt\ge C_0\int_0^{+\infty}e^{-t/\eps}\eta(\eps t)\zeta_n(t)\,dt
 \end{aligned}\]
 where the right hand side goes to $+\infty$ as $n\to+\infty$ by monotone convergence.
We conclude that 
$
\lim_{n\to+\infty}\mathcal F_\eps(w_n)=-\infty.$
\end{proof}

\section{Proof of the main result}\label{proofsect}

\MMM
We are ready to give the proof of Theorem \ref{maintheorem}. The crucial  bounds in $W^{2,\infty}((0,T);H')$ for solutions $u_\eps$ of problem \eqref{min} are obtained by exploiting some arguments that we have introduced in \cite{MP}. \KKK

\begin{proofad1}
Let $\eps\in(0,\eps_F)$ and let $u_\eps$ be a solution to problem \eqref{min}. Let $w_\eps(t,x):=u_\eps(t/\eps,x)$, so that $w_\eps$ minimizes functional $\mathcal F_\eps$ over $\mathcal V_\eps^0$ (see Remark \ref{change}).
  Let $\varphi\in C^\infty_c((0,+\infty))$ and let $\xi$  be defined by \eqref{xi}. 
{
 Let $h\in H$.
For every $\delta\in\mathbb R$, the function $u_\eps(t,x)+\delta\xi(t)h(x)$ belongs to $\mathcal U_\eps^0$ and  \eqref{EL} holds thanks to Lemma \ref{lemmaEL}.
\KKK

\medskip

{\textbf {Step 1. \boldmath$W^{2,\infty}((0,T);H')$ bounds}.}
Since  by H\"older inequality we have
\[\begin{aligned}
&\left|\int_0^{+\infty}\xi(t)e^{-t}(DW(u_\eps(t,\cdot)),h)\,dt\right|\\&\qquad\le \int_0^{+\infty}\int_{\R^N}e^{-t}|\xi(t)|(|\nabla u_\eps(t,x)||\nabla h(x)|+|u_\eps(t,x)|^{r-1}|h(x)|)\,dx\,dt\\&\qquad
\le \|\nabla h\|_{L^2(\R^N)}\int_0^{+\infty}e^{-t}|\xi(t)|\,\|\nabla u_\eps(t,\cdot)\|_{L^2(\R^N)}\,dt\\&\qquad\quad+
\| h\|_{L^r(\R^N)}\int_0^{+\infty}e^{-t}|\xi(t)|\,\| u_\eps(t,\cdot)\|_{L^r(\R^N)}^{r-1}\,dt,
\end{aligned}\]
we deduce from \eqref{EL}, since $\xi''(t)e^{-t}=\varphi(t)$ and by also using \eqref{laplace}, that
\[\begin{aligned}
\left|\int_0^{\infty}\varphi(t)\int_{\R^N}u_\eps''(t,x)h(x)\,dx\,dt\right|&\le \eps^2\|\nabla h\|_{L^2(\R^N)}\int_0^{+\infty}e^{-t}|\xi(t)|\,\|\nabla u_\eps(t,\cdot)\|_{L^2(\R^N)}\,dt\\&\qquad+\eps^2
\| h\|_{L^r(\R^N)}\int_0^{+\infty}e^{-t}|\xi(t)|\,\| u_\eps(t,\cdot)\|_{L^r(\R^N)}^{r-1}\,dt\\&\qquad+\eps^2\|h\|_{L^2(\R^N)}\int_0^{+\infty}|\xi(t)|e^{-t}\|f(\eps t,\cdot)\|_{L^2(\R^N)}\,dt.
\end{aligned}\]
Therefore, Lemma \ref{lemma1}   entails 
\beeq\lab{estfond}
\left|\int_0^{\infty}\varphi(t)\int_{\R^N}u_\eps''(t,x)h(x)\,dx\,dt\right|\le \eps^2 \|h\|_{H}\int_0^{\infty}\Theta_\eps(\eps t)|\varphi(t)|\,dt,
\eneq
where \[\Theta_\eps(\tau):=(2+r)\left(\bar Q +(28+ 4\tau)\int_0^\tau Q_\eps(s)\,ds\right)+\sqrt{Q_\eps(\tau)},\]
and we notice that by exploiting Lemma \ref{Linfty} we get
\beeq\label{barkey}
\|\Theta_\eps\|_{L^\infty(0,\tau)}\le\sqrt{M_F(\tau)}+(2+r)(\bar Q+\tau(28+4\tau)M_F(\tau))=:\bar M_F(\tau)
\eneq
for every $\tau>0$, where we stress that $\bar M_F(\tau)$ depends only on $\tau,w_0,w_1$ and $F$, but not on $\eps$. \KKK

If $T>0$ and $\tilde\varphi\in C^\infty_c((0,T))$, then we may apply \eqref{estfond} with the test function $\varphi\in C^{\infty}_c((0,+\infty))$ given by $\varphi(t)=\tilde\varphi(\eps t)$. In this way, by changing variables we get 
\[\begin{aligned}
&\eps\left|\int_0^{+\infty}\tilde\varphi(t)\int_{\R^N}w_\eps''(t,x)h(x)\,dx\,dt\right|=\left|\int_0^{+\infty}\varphi(t)u_\eps''(t,x)\,dx\,dt\right|\\&\qquad\qquad\le \eps^2\|h\|_H\int_0^{+\infty}\Theta_\eps(\eps t)|\tilde\varphi(\eps t)|\,dt=\eps \|h\|_H\int_0^T\Theta_\eps(t)|\tilde\varphi(t)|\,dt
\end{aligned}
\]
and therefore  since $\tilde\varphi$ is supported in $(0,T)$  we get by \eqref{barkey}
\[
\left|\int_0^T\tilde\varphi(t)\int_{\R^N}w_\eps''(t,x)h(x)\,dx\,dt\right|\le \|h\|_{H}\,\bar M_F(T)\,\int_0^T|\tilde\varphi(t)|\,dt.
\]
By taking the supremum among all test functions $\tilde\varphi\in C^\infty_c((0,T))$ such that $\int_0^T|\tilde\varphi(t)|\,dt\le 1$ we get 
\beeq\label{2derivate}
\left \| \int_{\R^N}w_\eps''(t,x)h(x)\,dx\right \|_{L^\infty(0,T)}\le \bar M_F(T)\,\|h\|_{H}.
\eneq
Since \PPP $w_\eps\in H^2_{loc}([0,+\infty);L^2(\R^N))$ and $w_\eps'(0,\cdot)=w_1(\cdot)$, for every $t\ge 0$ we have \KKK
 \beeq\label{fundamental}
 \int_{\R^N}w_\eps'(t,x)h(x)\,dx=\int_{\R^N}w_1(x)h(x)\,dx+\int_0^t\int_{\mathbb R^N}w_\eps''(\tau,x)h(x)\,dx\,d\tau
 \eneq
 and we get as a consequence
 \beeq\label{1derivata}
\left\|\int_{\R^N}w_\eps'(\cdot,x)h(x)\,dx\right\|_{L^\infty(0,T)}\le (\|w_1\|_{L^2(\R^N)}+T\bar M_F(T))\,\|h\|_{H},
\eneq
and similarly
 \beeq\label{0derivate}
\left\|\int_{\R^N}w_\eps(\cdot,x)h(x)\,dx\right\|_{L^\infty(0,T)}\le (\|w_0\|_{L^2(\R^N)}+T\|w_1\|_{L^2(\R^N)}+T^2\bar M_F(T))\,\|h\|_{H}.
\eneq
The estimates \eqref{2derivate}-\eqref{1derivata}-\eqref{0derivate} hold true for every $\eps\in(0,\eps_F)$, 
therefore the family $(w_\eps)_{\eps\in(0,\eps_F)}$ is uniformly bounded in \PPP $W^{2,\infty}((0,T);H')$, \KKK and as such along a suitable vanishing sequence $(\eps_n)_{n\in\mathbb N}\subset(0,\eps_F)$ it converges weakly in $H^2((0,T);H')$ to some $w\in W^{2,\infty}((0,T);H')$, and thus the convergence holds in the sense of distributions on $(0,T)\times\R^N$. 
Moreover,
 \PPP since the family $(\int_{\R^N}w_{\eps}'(\cdot,x)h(x)\,dx)_{\eps\in(0,\eps_F)}$ is bounded in $H^1((0,T))$, along a further subsequence it converges uniformly on $[0,T]$, therefore we can pass to the limit in \eqref{fundamental} and get 
  \[
 \langle w'(t,\cdot),h(\cdot)\rangle=\int_{\R^N}w_1(x)h(x)\,dx+\int_0^t\langle w''(\tau,\cdot),h(\cdot)\rangle\,d\tau\qquad\mbox{for every $t\in[0,T]$},
 \]
 where $\langle\cdot,\cdot\rangle$ denotes the duality between $H'$ and $H$.
  By evaluating at $t=0$ we identify $w'(0,\cdot)$ with $w_1$ in $H'$, since $h\in H$ is arbitrary. Therefore  $w'(0,x)=w_1(x)$  a.e. in $\mathbb R^N$. Similarly we get $w(0,x)=w_0(x)$ a.e. in $\R^N$. \KKK


\medskip

{\textbf{Step 2. Further a-priori estimates.}}
\MMM Since $w_\eps(t,x):=u_\eps(t/\eps,x)$,
from Lemma \ref{appendixlemma} we deduce that for every $t> 0$  
\begin{equation}\label{inL2}\frac1{2}\|w_\eps'(t,\cdot)\|^2_{L^2(\mathbb R^N)}\le Q +(28t+4t^2)\,\|Q_\eps\|_{L^\infty(0,t)}\le Q +(28t+4t^2)\,M_F(t) \end{equation}
where $Q$ is  the constant therein, and where Lemma \ref{Linfty} has been invoked as well,  so that the right hand side of \eqref{inL2}  depends only on $F,w_0,w_1$ and $t$, but not on $\eps$. \KKK
For every $T>0$ we may write 
\[\begin{aligned}
&\int_0^T\int_{\R^N}|w_\eps(t,x)|^2\,dx\,dt\le 2T\|w_0\|^2_{L^2(\R^N)}+\int_0^T\int_{\R^N}|w_\eps(t,x)-w_0(x)|^2\,dx\,dt\\&\qquad=
2T\|w_0\|^2_{L^2(\R^N)}+\int_0^T\int_{\R^N}\left|\int_0^tw'_\eps(s,x)\,ds\right|^2\,dx\,dt. 
\end{aligned}\]
By applying \eqref{inL2} and Jensen inequality we deduce that there exists a constant $C_T$ (only depending on $w_0,w_1, F$ and $T$) such that
\begin{equation}\label{inL22}
\int_0^T\int_{\R^N}|w_\eps(t,x)|^2\,dx\,dt\le C_T.
\end{equation}
By \eqref{inL2} and \eqref{inL22}, the family $(w_\eps)_{\eps\in(0,\eps_F)}$ is uniformly bounded in $H^1((0,T);L^2(\mathbb R^N))$.
 Moreover,  \KKK
  by arguing as in the proof  of \cite[Formula (2.9)]{ST2},  for every $s\in [0,1]$ we get (by taking into account \MMM estimate \eqref{basicenergy} in Lemma \ref{exist})
 \beeq\begin{aligned}\displaystyle\int_s^{s+1}W(u_\eps(t,\cdot))\,dt&\le e^2\int_0^{\infty}e^{-t}W(w_\eps(t,\cdot))\,dt\\
 &\displaystyle\le\int_0^{+\infty}e^{-t}\left\{\frac1{2\eps^2}\|u_{\eps}''(t)\|_{L^2(\R^N)}^2+W(u_{\eps}(t))\right\}\,dt\le \bar C
 \end{aligned}
 \eneq
 and by recalling Lemma \ref{appendixlemma} and Lemma \ref{Linfty}, since $\min_{x\in[1,2]} xe^{-x}= 2e^{-2}$,   for every $s>1$ we get \MMM
 \[\begin{aligned} \int_s^{s+1}W(u_\eps(t,\cdot))\,dt&\le \frac{e^2}2\int_s^{s+1}(t-s+1)e^{-(t-s+1)}W(w_\eps(t,\cdot))\,dt\\
 &\displaystyle\le \frac{e^2}2\int_{s-1}^{+\infty}(t-s+1)e^{-(t-s+1)}W(w_\eps(t,\cdot))\,dt\\&\le\frac{Qe^2}2+\frac{e^2}2(28+4\eps s)\,\int_0^{\eps s}Q_\eps(\tau)\,d\tau\le \frac{Qe^2}2+\frac{e^2}2(28+4\eps s)\, M_F(\eps s)\,\eps s, \end{aligned}
 \]
so that for every $s\ge 0$ we obtain the estimate
$$\int_s^{s+1}W(u_\eps(t,\cdot))\,dt\le \max\left\{\bar C,\,\frac{Qe^2}2+\frac{e^2}2(28+4\eps s)\, M_F(\eps s)\,\eps s \right\}.
$$
Since $u_\eps(t,\cdot)=w_\eps(\eps t,\cdot)$, by changing variables we get for every $\tau\ge0$
\beeq\lab{estW}\int_\tau^{\tau+\eps}W(w_\eps(t,\cdot))\,dt\le \eps\, \max\left\{\bar C,\,\frac{Qe^2}2+\frac{e^2}2(28+4\tau)\,\tau\, M_F(\tau) \right\}.
\eneq
Therefore, if  $T \ge \eps$, by covering $[0, T]$ with consecutive intervals of lenght $\eps$ and using \eqref{estW} in each interval one obtains
\[\int_0^{T}W(w_\eps(t,\cdot))\,dt\le \tilde C_T,\]
where $\tilde C_T$ is a constant depending only on $w_0$, $w_1$, $F$ and $T$. In particular, also recalling \eqref{inL22},
the family $(w_\eps)_{\eps\in(0,\eps_F)}$ is uniformly bounded in $L^r((0,T)\times\R^N)$ and in $L^2((0,T);H^1(\R^N))$.}


\medskip

{\textbf{Step 3. Convergence to nonlinear wave equation.}}
 Summing up, we get the existence of a vanishing sequence $(\eps_n)_{n\in\mathbb N}\subset (0,\eps_F)$, such that $w_{\eps_n}$ converges weakly in $H^{1}((0,T)\times B_R)$ and strongly in $L^q((0,T)\times B_R)$ for every $T>0$, every $R>0$ and every  $q\in [2,r)$ ($q=2$ if $r\le 2$), weakly in $L^2((0,+\infty)\times\R^N)$, and pointwise a.e in $(0,+\infty)\times \R^N$ to some $w$ that belongs to $\in H^1((0,T);L^2(\R^N))\cap H^2((0,T);H')$ for every $T>0$.
\KKK
 Let now $\eta\in C^{\infty}_c((0,T)\times\R^N)$. \KKK Then by setting $\psi_\eps(t,x):=\eta (t,x)e^{t/\eps}$ and by taking into account the first order minimality condition satisfied by the minimizer $w_\eps$ of functional $\mathcal F_\eps$, which is the analogous of \eqref{EL}, we have
\[\begin{aligned}
&\displaystyle -\int_0^{T}\int_{\R^N}{w'_\eps}(t,x)\left(\eps^{2}\eta'''(t,x)+ 2\eps\eta''(t,x)+\eta'(t,x)\right)\,dx\,dt\\&\quad=-\eps^{2}\int_0^{T}\int_{\R^N}{w_\eps'}(t,x) \left(\psi''_\eps(t,x)\,e^{-t/\eps}\right)'\,dx\,dt\\
&\quad\displaystyle =\eps^{2}\int_0^{T}\int_{\R^N}{w''_\eps}(t,x) \psi_\eps''(t)\,e^{-t/\eps} \,dx\,dt=\eps^{2}\int_0^{+\infty}\int_{\R^N}{w''_\eps}(t,x) \psi_\eps''(t,x)\,e^{-t/\eps}\,dt\\
&\quad =-\int_0^{+\infty}(D W(w_\eps(t,\cdot)),\psi_\eps(t,\cdot))\,e^{-t/\eps}\,dt+\int_0^{+\infty}\int_{\R^N}
\psi_\eps(t,x)\,G(t,x, w_\eps(t,x))\,e^{-t/\eps}\,dx\,dt
\\&\quad=-\int_0^{T}\int_{\R^N}\nabla w_\eps(t,x)\cdot\nabla \eta(t,x)\,dx\,dt
-\int_0^{T}\int_{\R^N}| w_\eps(t,x)|^{r-2}\, w_\eps(t,x) \eta(t,x)\,dx\,dt
\\&\qquad\quad+\int_0^{T}\int_{\R^N}
G(t,x,w_\eps(t,x))\,\eta(t,x)\,dx\,dt.
\end{aligned}\]
Since $w_{\eps_n}\to w$ in the sense of distributions and pointwise a.e. on $(0,T)\times\mathbb R^N$, we pass to the limit as $n\to+\infty$ to obtain for every $\eta\in C^\infty_c((0,T)\times\R^N)$
\[\begin{aligned}
& \int_0^{T}\int_{R^N}(-{w'}(t,x)\eta'(t,x)+\nabla w(t,x)\cdot\nabla\eta(t,x)+|w(t,x)|^{r-2}w(t,x)\eta(t,x))\,dx\,dt\\&\qquad=\int_0^{T}\int_{\R^N}
G(t,x, w(t,x))\eta(t,x)\,dx\,dt,
\end{aligned}\]
i.e., 
\[
w''=\Delta w-|w|^{r-2}w+G(t,x,w)\quad \mbox{in}\ \mathcal D'((0,T)\times\R^N),
\]
and the arbitrariness of $T$  allows to conclude.
In order to pass to the limit, we have also used dominated convergence, taking advantage of \eqref{C1}-\eqref{f}-\eqref{hyp0},  for the term involving $G$. On the other hand,  we have used the strong convergence of $w_{\eps_n}$ in $L^q((0,T)\times B_R)$ for every $1<q<r$ ($q=2$ if $r\le 2$) for the term involving  $| w_{\eps_n}|^{r-2}\, w_{\eps_n}$.
\KKK

\medskip

{\textbf{Step 4. Energy inequality}.} Here the proof is the very same of  \cite{TT1}.
\MMM
By \eqref{approxenergy}  and \cite[Lemma 6.1]{ST2} we obtain that for every $a>0$, every $\delta\in(0,1)$ and every $T\ge 0$
\[
\int_0^{a\delta}s e^{-s}\,ds\int_{T+\delta a}^{T+a}W(u_\eps(s,\cdot))\,ds\le \int_T^{T+a}\left(E_\eps(s)-\frac1{2\eps^2}\|u_\eps'(s,\cdot)\|^2_{L^2(\mathbb R^N)}\right)\,ds,
\]
which after scaling and changing variables, recalling that $u_\eps(t,x)=w_\eps(\eps t,x)$, entails
\begin{equation*}
\int_0^{a \delta/\eps}te^{-t}\,dt\int_{T+\delta a}^{T+a}W(w_\eps(t,\cdot))\,dt+\frac12\int_T^{T+a}\|w_\eps'(t,\cdot)\|^2_{L^2(\R^N)}\,dt\le \int_T^{T+a}E_\eps(t/\eps)\,dt.
\end{equation*}
We take the limit as $\eps\to 0$: in the previous steps we have 
 already established the weak convergence, along a suitable vanishing sequence, of $w_{\eps}$ to $w$ in $L^2((0,T_0);H^1(\R^N))$, in $ H^1((0,T_0);L^2(\R^N))$ and in $ L^r((0,T_0)\times\R^N)$, for every $T_0>0$, thus by weak lower semicontinuity of $L^p$ norms we deduce
\[
\int_{T+\delta a}^{T+a}W(w(t,\cdot))\,dt+\frac12\int_T^{T+a}\|w'(t,\cdot)\|^2_{L^2(\R^N)}\,dt\le \int_T^{T+a}\limsup_{\eps\to 0}E_\eps(t/\eps)\,dt
\]
where we have used Fatou Lemma in the right hand side. 
By taking the limit as $\delta\to 0$ and by taking \eqref{newinappendixlemma} into account we get
\[\begin{aligned}
&\frac1a\int_{T }^{T+a}W(w(t,\cdot))\,dt+\frac1{2a}\int_T^{T+a}\|w'(t,\cdot)\|^2_{L^2(\R^N)}\,dt\\&\qquad\le \frac1a\int_T^{T+a}\left(\left(\frac12\|w_1\|_{L^2(\R^N)}^{2}+W(w_0)\right)^\frac12+\sqrt{t/2}\,\left(\int_0^t\|f(s,\cdot)\|^2_{L^2(\R^N)}\,ds\right)^{\frac12}\right)^2\,dt
\end{aligned}\]
for every $T\ge 0$ and every $a>0$. By  letting $a\to 0$ we obtain
 \eqref{energyinequality}.
\end{proofad1}

\KKK

\section{Appendix}

The main purposes of this appendix are to prove
the following  technical lemma that is required in order to adapt some key estimates from \cite{TT1} to our framework, and then to prove Lemma \ref{appendixlemma}

\begin{lemma}\label{longlemma} Let $\eps\in(0,\eps_F)$ and let $u_\eps\in\mathcal U_\eps^0$.
Let $g:[0,+\infty)\to \mathbb R$ be a $C^1$ function. Suppose   that there exists $c_0>0$ such that $|g(t)|\le c_0 t$  for every $t\ge 0$ and $|g'(t)|\le c_0$ for every $t\ge 0$. Let \MMM$0<\delta_0<\tfrac{\eps_F-\eps}{c_0\eps_F}$\KKK. For every $\delta\in(-\delta_0,\delta_0)$, let $g_\delta:[0,+\infty)\to[0,+\infty)$ be defined by $g_\delta(t)=t-\delta g(t)$. Then, the mapping 
\begin{equation}\label{deltamap}
(-\delta_0,\delta_0)\ni\delta\mapsto \int_{0}^{+\infty}\int_{\mathbb R^N} e^{-t} F(\eps t,x, u_{\eps}(g_\delta(t),x))\,dx\,dt
\end{equation} 
is differentiable at $\delta=0$ with derivative
\begin{equation}\label{derivata}
-\int_0^{+\infty}\int_{\mathbb R^N}g(t)\,e^{-t}\,\frac{\partial F}{\partial v}(\eps t,x, u_\eps(t,x))\,u_\eps'(t,x)\,dx\,dt,
\end{equation}
and the mapping
\[
(-\delta_0,\delta_0)\ni\delta\mapsto \int_{0}^{+\infty}\int_{\mathbb R^N} e^{-t} F(\eps t,x, u_{\eps}(g_\delta(t),x)+t\delta\eps g'(0)w_1(x))\,dx\,dt
\]
is differentiable at $\delta=0$ with derivative
\begin{equation}\label{derivata2}
-\int_0^{+\infty}\int_{\mathbb R^N}\,e^{-t}\,\frac{\partial F}{\partial v}(\eps t,x, u_\eps(t,x))\,(g(t)\,u_\eps'(t,x)-t\eps g'(0)\,w_1)\,dx\,dt.
\end{equation}

\end{lemma}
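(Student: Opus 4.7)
The plan is to establish differentiability at $\delta = 0$ via Lebesgue's dominated convergence theorem applied to the difference quotients. For the first mapping, one writes
\[
\frac{1}{\delta}\iint e^{-t}\bigl[F(\eps t, x, u_\eps(g_\delta(t),x)) - F(\eps t, x, u_\eps(t,x))\bigr]\,dx\,dt
\]
and identifies both the pointwise limit of the integrand and an integrable dominating function; the first-order expansion $F(a)-F(b) = \bigl(\int_0^1 G(b+\lambda(a-b))\,d\lambda\bigr)(a-b)$ is the basic factorization used throughout.

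First I would verify that the limit candidate in \eqref{derivata} defines an integrable function: via $|G|\le f$, $|g(t)|\le c_0 t$, Cauchy--Schwarz in $(t,x)$, Lemma \ref{quicklemma} with $\alpha = 2$, and $\int_0^{+\infty} e^{-t}\|u_\eps'(t,\cdot)\|_{L^2}^2\,dt < +\infty$ from \eqref{'0}, one gets
\[
\iint e^{-t}|g(t)||G(\eps t, x, u_\eps(t,x))||u_\eps'(t,x)|\,dx\,dt \le c_0\sqrt{K_F^*(2)}\Bigl(\int_0^{+\infty} e^{-t}\|u_\eps'(t,\cdot)\|_{L^2}^2\,dt\Bigr)^{1/2}.
\]
For the pointwise a.e.\ convergence, Fubini gives $u_\eps(\cdot,x)\in H^2_{loc}(0,+\infty)\subset C^1_{loc}$ for a.e.\ $x$, so
\[
\frac{u_\eps(g_\delta(t),x) - u_\eps(t,x)}{\delta} = -g(t)\int_0^1 u_\eps'(t - \sigma\delta g(t), x)\,d\sigma \xrightarrow{\delta\to 0} -g(t)\, u_\eps'(t,x)
\]
by continuity of $u_\eps'(\cdot,x)$; combined with the continuity of $G$ in its third variable from \eqref{C1}, the difference quotient of the integrand converges a.e.\ to the integrand of \eqref{derivata}.

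The technical heart is constructing a $\delta$-uniform dominating function. From $|G|\le f$ and the integral representation of $u_\eps(g_\delta(t),x)-u_\eps(t,x)$, the difference quotient is bounded pointwise by
\[
e^{-t}f(\eps t,x)|g(t)|\int_0^1|u_\eps'(t - \sigma\delta g(t), x)|\,d\sigma.
\]
Integrating and applying Fubini in $\sigma$ and Cauchy--Schwarz in $(t,x)$ produces a product whose first factor $c_0\sqrt{K_F^*(2)}$ is uniformly bounded, leaving the second factor
\[
\Bigl(\int_0^1\int_0^{+\infty} e^{-t}\|u_\eps'(t-\sigma\delta g(t),\cdot)\|_{L^2}^2\,dt\,d\sigma\Bigr)^{1/2}
\]
to be controlled via the substitution $s = t-\sigma\delta g(t)$. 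The inverse $\phi_{\sigma\delta}(s) := g_{\sigma\delta}^{-1}(s)$ satisfies $s/(1+c_0|\delta|) \le \phi_{\sigma\delta}(s) \le s/(1-c_0|\delta|)$ and has Jacobian bounded by $(1-c_0|\delta|)^{-1}$; the hypothesis $\delta_0 < (\eps_F-\eps)/(c_0\eps_F)$, equivalent to $1-c_0\delta_0 > \eps/\eps_F$, guarantees that these quantities remain close to $s$ and $1$ respectively uniformly on $(-\delta_0,\delta_0)$, so the inner integral is controlled by a $\delta$-uniform multiple of $\int_0^{+\infty}e^{-s}\|u_\eps'(s,\cdot)\|_{L^2}^2\,ds$, which is finite by \eqref{'0}. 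Dominated convergence then yields \eqref{derivata}.

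The proof of \eqref{derivata2} is analogous: the extra perturbation $t\delta\eps g'(0)w_1(x)$ is linear in $\delta$ and, through the same first-order expansion, contributes the additive term $G(\eps t, x, u_\eps(t,x))\cdot t\eps g'(0) w_1(x)$ to the pointwise limit, whose integrability follows from Cauchy--Schwarz, $w_1\in L^2(\R^N)$ and Lemma \ref{quicklemma} with $\alpha=2$. The main obstacle is the subtle weight book-keeping in the third step: the substitution $s = t-\sigma\delta g(t)$ compresses the time axis and a priori worsens the exponential weight $e^{-t}$ in the integrand, so one must exploit the precise threshold $\delta_0 < (\eps_F - \eps)/(c_0\eps_F)$ to keep the compressed weight and Jacobian comparable to $e^{-s}$ and $1$ uniformly on $\delta\in (-\delta_0,\delta_0)$.
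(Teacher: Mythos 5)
There is a genuine gap in your treatment of the dominating function, and it sits precisely where you flag the ``weight book-keeping.'' After the substitution $s = t - \sigma\delta g(t)$ you claim the integral $\int_0^1\int_0^{+\infty} e^{-t}\|u_\eps'(t-\sigma\delta g(t),\cdot)\|_{L^2}^2\,dt\,d\sigma$ is controlled by a $\delta$-uniform multiple of $\int_0^{+\infty}e^{-s}\|u_\eps'(s,\cdot)\|_{L^2}^2\,ds$, invoking the threshold on $\delta_0$. This does not follow. Writing $\phi_{\sigma\delta}=g_{\sigma\delta}^{-1}$, the Jacobian is indeed uniformly bounded, but the exponential becomes $e^{-\phi_{\sigma\delta}(s)}$; when $\sigma\delta g(\cdot)<0$ one has $\phi_{\sigma\delta}(s)<s$ and the best uniform bound is $e^{-\phi_{\sigma\delta}(s)}\le e^{-s/(1+c_0\delta_0)}$, with $1/(1+c_0\delta_0)<1$ strictly. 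The only decay information available on $u_\eps'$ is $\int_0^{+\infty}e^{-s}\|u_\eps'(s,\cdot)\|_{L^2}^2\,ds<+\infty$ from \eqref{'0}, with no room to spare: $\int_0^{+\infty}e^{-s/(1+c_0\delta_0)}\|u_\eps'(s,\cdot)\|_{L^2}^2\,ds$ is not known to be finite, and the constraint $\delta_0<(\eps_F-\eps)/(c_0\eps_F)$ offers no help here, because it encodes slack in the Laplace transformability of $f$ (namely that $\eqref{laplace}$ holds at abscissa $1/(2\eps_F)$, strictly smaller than $1/(2\eps)$), not in the exponential summability of $u_\eps'$. In addition, even if the uniform bound held, a uniform estimate on $\iint|H_\delta|$ is not a dominating function, so dominated convergence would still not apply as stated; you would need Vitali's theorem with an explicit uniform-integrability verification or the generalized dominated convergence theorem with convergence of the dominators.

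The paper sidesteps exactly this obstacle. Instead of splitting by Cauchy--Schwarz in the form (something involving $f$) $\times$ (something involving $u_\eps'$ at a \emph{shifted} time with the original weight $e^{-t}$), it uses the embedding $H^2((0,+\infty);L^2)\hookrightarrow W^{1,\infty}((0,+\infty);L^2)$ to set $A_\eps:=\mathrm{esssup}_t\|e^{-t/2}u_\eps'(t,\cdot)\|_{L^2}$ and, after the Cauchy--Schwarz steps, obtains for the $x$-integral of the difference quotient the $\delta$-independent bound $A_\eps\,c_0\,t\,e^{(-1+\delta_0 c_0)t/2}\,\|f(\eps t,\cdot)\|_{L^2}$. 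All the compression of the exponential weight is thereby absorbed into the factor $e^{(-1+\delta_0 c_0)t/2}$ paired with $f$, where \eqref{laplace} and the threshold on $\delta_0$ supply exactly the needed slack (this is the content of \eqref{basicfinite}). Your Cauchy--Schwarz decomposition places the weight burden on $u_\eps'$, where no slack exists, so the argument does not close. To repair it you would need to mimic the paper's split: take the $L^\infty$ sup of $e^{-s/2}u_\eps'$ out first, then perform Cauchy--Schwarz in $x$ against $f(\eps t,\cdot)$, keeping the remaining exponential excess on the $f$ side.
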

\begin{proof} We notice that $e^{-t} F(\eps t,x, u_{\eps}(t,x))\in L^1((0,+\infty)\times\mathbb R^N)$ is a consequence of  $u_\eps\in\mathcal U_\eps^0$ as seen in the proof of Lemma \ref{exist}, see \eqref{below}.
We first check that $e^{-t} F(\eps t,x, u_{\eps}(g_\delta(t),x))\in L^1((0,+\infty)\times\mathbb R^N)$ as well, for every $\delta\in(-\delta_0,\delta_0)$, so that the mapping \eqref{deltamap} is well defined. 
In order to obtain this property, a preliminary remark is that
$$
\int_{0}^{+\infty}\int_{\R^N}e^{-t}F(\eps t,x,w_0(x))\,dx\,dt\le C_F+K_F^*\|w_0\|_{L^2(\R^N)},
$$
as a direct consequence of the assumptions on $F$, using  \eqref{lip}, Jensen inequality and Lemma \ref{quicklemma}.
 We notice that, for every $\delta\in(-\delta_0,\delta_0)$, $g_\delta$ is a $C^1$ strictly increasing bijection with $C^1$  inverse. Moreover, since $u_\eps\in H^2((0,T);L^2(\R^N))$ for every $T>0$, we have that $s\mapsto u_\eps(s,x)$ is absolutely continuous on $[0,T]$ for a.e. $x\in\mathbb R^N$. Therefore by also invoking \eqref{lip} we get
 \[\begin{aligned}
&\int_{0}^{+\infty}\int_{\mathbb R^N} e^{-t} |F(\eps t,x, u_{\eps}(g_\delta(t),x))|\,dx\,dt\\&\qquad=
\int_{0}^{+\infty}\int_{\mathbb R^N} e^{-t} |F(\eps t,x, u_{\eps}(g_\delta(t),x))-F(\eps t,x,w_0(x))+F(\eps t,x,w_0(x))|\,dx\,dt\\&\qquad\le
C_F+ K_F^*\|w_0\|_{L^2(\R^N)}+ \int_{0}^{+\infty}\int_{\mathbb R^N} e^{-t} f(\eps t,x)| u_{\eps}(g_\delta(t),x)-u_\eps(0,x)|\,dx\,dt\\&\qquad\le
C_F+ K_F^*\|w_0\|_{L^2(\R^N)}+ \int_{0}^{+\infty}\int_{\mathbb R^N} e^{-t} f(\eps t,x)\int_{0}^{g_\delta(t)}|u_\eps'(s,x)|\,ds\,dx\,dt
\end{aligned}\]
and we need to check that the last integral is finite:  by two applications of Cauchy-Schwarz inequality, it is bounded above by
\begin{equation}\label{quattro}\begin{aligned}
& \int_{0}^{+\infty}\int_{\mathbb R^N} e^{-t+{g_\delta(t)/2}} f(\eps t,x)\int_{0}^{g_\delta(t)}e^{-s/2}|u_\eps'(s,x)|\,ds\,dx\,dt\\&\qquad\le
 \int_{0}^{+\infty} e^{-t+{g_\delta(t)/2}} \|f(\eps t,\cdot)\|_{L^2(\R^N)}\left(\int_{\R^N}\left(\int_{0}^{g_\delta(t)}e^{-s/2}|u_\eps'(s,x)|\,ds\right)^2\,dx\right)^\frac12\,dt\\&\qquad\le
 \int_0^{+\infty}e^{-t+{g_\delta(t)/2}}\|f(\eps t,\cdot)\|_{L^2(\R^N)}\sqrt{g_\delta(t)}\left(\int_{\R^N}\int_0^{+\infty} e^{-s}|u_\eps'(s,x)|^2\,ds\,dx\right)^\frac12\,dt,
\end{aligned}\end{equation}
where the interior integral over $(0,+\infty)\times\R^N$ is finite due to \eqref{'0} since $u_\eps\in \mathcal U_\eps^0$, 
\MMM and where we notice that  $g_\delta(t)=t-\delta g(t)\le(1+\delta_0 c_0)t\le 2t$ for every $t\ge 0$, implying
\begin{equation}\label{basicfinite}\begin{aligned}
 &\int_0^{+\infty}e^{-t+{g_\delta(t)/2}}\|f(\eps t,\cdot)\|_{L^2(\R^N)}\sqrt{g_\delta(t)}\,dt\\&\qquad\le \eps^{-3/2}\int_0^{+\infty}\sqrt{2t}\,e^{-\frac{1-\delta_0c_0}{2\eps}t}\,\|f(t,\cdot)\|_{L^2(\R^N)}\,dt<+\infty\end{aligned}
\end{equation}
due to the condition \eqref{laplace}, since the restriction $\delta_0<\tfrac{\eps_F-\eps}{c_0\eps_F}$ yields $\tfrac{1-\delta_0c_0}{2\eps}>\tfrac1{2\eps_F}$. \KKK
In a similar way, we can prove that $e^{-t} F(\eps t,x, u_{\eps}(g_\delta(t),x)+t\delta\eps g'(0)w_1(x))\in L^1((0,+\infty)\times\mathbb R^N)$. Indeed, by  \eqref{lip} we have for a.e. $(t,x)\in(0,+\infty)\times\R^N$
\[
|F(\eps t,x, u_{\eps}(g_\delta(t),x)+t\delta\eps g'(0)w_1(x))-F(\eps t,x, u_{\eps}(g_\delta(t),x))|\le |t\delta\eps g'(0)w_1(x)|f(\eps t,x)
\]
so that we are  left to prove that $te^{-t}f(\eps t,x)w_1(x)$ is in $L^1((0,+\infty)\times\mathbb R^N)$, which immediately follows from Lemma \ref{quicklemma} and $w_1\in L^2(\R^N)$ since by Young inequality 
\begin{equation}\label{adp}\begin{aligned}
\int_0^{+\infty}\int_{\R^N}te^{-t}f(\eps t,x)|w_1(x)|\,dx\,dt&\le
\frac12\int_0^{+\infty}\int_{\R^N}te^{-t}(f(\eps t,x)^2+|w_1(x)|^2)\,dx\,dt\\
&\le\frac12 K_F^*(1)+\frac12\|w_1\|_{L^2(\R^N)}^2.
\end{aligned}
\end{equation}

  We also notice that the integrand in \eqref{derivata} is indeed in $L^1((0,+\infty)\times\R^N)$, since \eqref{laplace} yields
 \begin{equation}\label{adr}\begin{aligned}
 &\int_0^{+\infty}\int_{\mathbb R^N}|g(t)|\,e^{-t}\,\left|\frac{\partial F}{\partial v}(\eps t,x, u_\eps(t,x))\right|\,|u_\eps'(t,x)|\,dx\,dt\\&\qquad
\le \int_0^{+\infty}\int_{\mathbb R^N}|g(t)|\,e^{-t}\,f(\eps t,x)\,u_\eps'(t,x)\,dx\,dt\\&\qquad\le \left( \int_0^{+\infty}\int_{\mathbb R^N}e^{-t}|g(t)|^2f(\eps t,x)^2\,dx\,dt\right)^\frac12\left( \int_0^{+\infty}\int_{\mathbb R^N}e^{-t}|u_\eps'(t,x)|^2\,dx\,dt\right)^\frac12\\&\qquad
\le c_0\sqrt{K_F^*(2)}\, \left( \int_0^{+\infty}\int_{\mathbb R^N}e^{-t}|u_\eps'(t,x)|^2\,dx\,dt\right)^\frac12
\end{aligned} \end{equation}
where we have used Lemma \ref{quicklemma} and where
the integral in the right hand side is finite thanks to $u_\eps\in \mathcal U_\eps^0$ (by recalling \eqref{'0} as before). Besides,  the second term in the integrand of \eqref{derivata2} is in $L^1((0,+\infty)\times\R^N)$ due to  \eqref{adp}.

Let us prove that the map \eqref{deltamap}  is differentiable at $\delta=0$.
 Since for $u_\eps$ the (classical) time derivative $u_\eps'(t,x)$ exists at a.e. $(t,x)\in(0,+\infty)\times\mathbb R^N$, we have
\begin{equation*}
\lim_{\delta\to 0}\frac{u_\eps(g_\delta(t),x)-u_\eps(t,x)}{\delta} =-g(t) u'_\eps(t,x) \qquad\mbox{ for a.e. $(t,x)\in(0,+\infty)\times\mathbb R^N$.}
\end{equation*}
Therefore, thanks to the fact that, for a.e. $(t,x)\in(0,+\infty)\times\R^N$, the map $v\mapsto F(t,x,v)$ is assumed to be \MMM $C^1(\R)$\KKK, see \eqref{C1},  we deduce
\begin{equation}\label{pointwise}
\lim_{\delta\to 0}\frac{F(\eps t,x,u_\eps(g_\delta(t),x))-F(\eps t,x,u_\eps(t,x))}{\delta} =-g(t)\,\frac{\partial F}{\partial v}(\eps t,x,u_\eps(t,x))\, u'_\eps(t,x) 
\end{equation}
 for a.e. $(t,x)\in(0,+\infty)\times\mathbb R^N$ by classical chain rule.
We next prove that for a.e. $t\in(0,+\infty)$
\begin{equation}\label{inspace}\begin{aligned}
&\lim_{\delta\to 0}\int_{\mathbb R^N} e^{-t}  \frac{F(\eps t,x,u_{\eps}(g_\delta(t),x))-F(\eps t,x,u_\eps(t,x))}{\delta}\,dx\\&\qquad\qquad=-\int_{\mathbb R^N}g(t)e^{-t}\,\frac{\partial F}{\partial v}(\eps t,x,u_\eps(t,x))\,u_\eps'(t,x)\,dx,\end{aligned}
\end{equation}
Indeed, after having  fixed  $t$ (outside a null set of times), pointwise a.e. convergence of the integrands follows from \eqref{pointwise}; on the other hand, still for the same fixed $t$ a dominating function is obtained by taking advantage of the continuous inclusion of $H^{2}((0,T);L^2(\mathbb R^N))$ into $W^{1,\infty}((0,T);L^2(\mathbb R^N))$ holding for every $T>0$, so that by taking large enough $T$ we have
\[
|u_\eps(g_\delta(t),x)-u_\eps(t,x)|\le \ell_T(x)|g_\delta(t)-t|=|\delta g(t)|\ell_T(x) \qquad\mbox{ for a.e. $x\in\mathbb R^N$}
\]
for some suitable nonnegative $\ell_T\in L^2(\mathbb R^N)$,
thus by \eqref{lip} for every $\delta\in(-\delta_0,0)\cup(0,\delta_0)$ we get the estimate
\[
\frac{|F(\eps t,x,u_\eps(g_\delta(t),x))-F(\eps t,x,u_\eps(t,x))|}{|\delta|} e^{-t}\le e^{-t}|g(t)||f(\eps t,x)|\ell_T(x)\qquad\mbox{ for a.e. $x\in\mathbb R^N$},
\]
where the right hand side is integrable in $\mathbb R^N$ since $\ell_T\in L^2(\mathbb R^N)$ and $f(\eps t,\cdot)\in L^2(\mathbb R^N)$ by \eqref{laplace}. 
Having shown that pointwsie a.e. $t\in(0,+\infty)$ convergence \eqref{inspace}, we are left to find a dominating function for passing to the limit through the time integral. We take advantage of the fact that $e^{-t/2}u_\eps\in H^2((0,+\infty);L^2(\mathbb R^N))\hookrightarrow W^{1,\infty}((0,+\infty);L^2(\mathbb R^N))$, so that both $e^{-t/2}u_\eps$ and $e^{-t/2}u_\eps'$ belong to $L^\infty((0,+\infty);L^2(\mathbb R^N))$. Letting $A_\eps:=\mathrm{esssup}_{t>0}\|e^{-t/2}u_\eps'(t,\cdot)\|_{L^2(\mathbb R^N)}$  we obtain with \eqref{lip} and with similar arguments as in \eqref{quattro} the estimate
\[
\begin{aligned}
&\left|\int_{\mathbb R^N} e^{-t}  \frac{F(\eps t,x,u_{\eps}(g_\delta(t),x))-F(\eps t,x,u_\eps(t,x))}{\delta}\,dx\right|\le\int_{\mathbb R^N} \frac{e^{-t}}{|\delta|} f(\eps t,x) \left|\int_{t}^{g_\delta(t)}u_\eps'(s,x)\,ds\right|dx\\
&\qquad\le \frac{e^{-t}e^{(t\vee g_\delta(t))/2}}{|\delta|}\int_{\mathbb R^N}f(\eps t,x)\int_{t\wedge g_\delta(t)}^{t\vee g_\delta(t)}e^{-s/2}|u'_\eps(s,x)|\,ds\,dx\\
&\qquad\le\frac1{|\delta|} e^{(-1+|\delta|c_0)t/2}\int_{\mathbb R^N}f(\eps t,x)|t-g_\delta(t)|^{\frac12}\left(\int_{t\wedge g_\delta(t)}^{t\vee g_\delta(t)}|e^{-s/2}u'_\eps(s,x)|^2\,ds\right)^{\frac12}\,dx\\
&\qquad\le \frac1{|\delta|} e^{(-1+|\delta|c_0)t/2}|t-g_\delta(t)|^{\frac12}\left(\int_{\mathbb R^N}|f(\eps t,x)|^2\,dx\right)^{\frac12}\left(\int_{\mathbb R^N}\int_{t\wedge g_\delta(t)}^{t\vee g_{\delta(t)}}|e^{-s/2}u_\eps'(s,x)|^2\,ds\right)^{\frac12}\\
&\qquad\le  \frac{A_\eps}{|\delta|} e^{(-1+|\delta|c_0)t/2}|t-g_\delta(t)|\,\|f(\eps t,\cdot)\|_{L^2(\R^N)}
\le  {A_\eps}c_0\, t e^{(-1+\delta_0c_0)t/2} \|f(\eps t,\cdot)\|_{L^2(\R^N)},
\end{aligned}
\] 
for every $\delta\in (-\delta_0,0)\cup(0,\delta_0)$,
where the right hand side is integrable on $(0,+\infty)$ by the same argument used for proving \eqref{basicfinite}. By dominated convergence, we conclude that the map \eqref{deltamap} is differentiable at $\delta=0$ with derivative given by \eqref{derivata}.

Very similar arguments allow to obtain the last statement.
\end{proof}

The following two lemmas are a slight extension of the results from \cite{ST2,TT1} and they  are written by borrowing some of the notation therein.

\begin{lemma}\label{borrow} Let $\eps\in(0,\eps_F)$ and let $u_\eps$ be a solution to problem \eqref{min}.
 Let 
\[
K_\eps(t):=\frac1{2\eps^2}\|u_\eps'(t,\cdot)\|^2_{L^2(\mathbb R^N)}, \quad
D_\eps(t):=\frac1{2\eps^2}\|u_\eps''(t,\cdot)\|^2_{L^2(\mathbb R^N)},\quad L_\eps(t):=D_\eps(t)+W(u_\eps(t,\cdot)).
\]
Let $g:[0,+\infty)\to \mathbb R$ be a $C^{1,1}$ nonnegative function such that  $g(0)=0$ and such that $g(t)$ is affine for large $t$.
Then, there holds
\begin{equation}\label{deltaderivative}
\begin{aligned}
&\int_0^{+\infty}\!\!\!\!\!\!e^{-t}\left[(g'(t)-g(t))L_\eps(t)-4g'(t)D_\eps(t)-g''(t)K_\eps'(t)+\eps tg'(0)  (DW (u_\eps(t,\cdot)), w_1)\right]dt\\&\qquad=
-\int_0^{+\infty}\int_{\mathbb R^N}\,e^{-t}\,\frac{\partial F}{\partial v}(\eps t,x, u_\eps(t,x))\,(g(t)\,u_\eps'(t,x)-t\eps g'(0)\,w_1(x))\,dx\,dt.
\end{aligned}
\end{equation}
\end{lemma}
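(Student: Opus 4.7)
The strategy is to compute the first-order optimality condition for $u_\eps$ along the one-parameter family of perturbations
\[
u^\delta(t,x):=u_\eps(g_\delta(t),x)+\delta\eps t\, g'(0)\, w_1(x),\qquad \delta\in(-\delta_0,\delta_0),
\]
with $\delta_0$ small, where $g_\delta(t):=t-\delta g(t)$. The linear corrector is chosen precisely so that both initial conditions of $\mathcal U_\eps^0$ are preserved: $g(0)=0$ yields $u^\delta(0,x)=w_0(x)$, while the chain rule combined with $g_\delta'(0)=1-\delta g'(0)$ gives $(u^\delta)'(0,x)=\eps w_1(x)(1-\delta g'(0))+\delta\eps g'(0)w_1(x)=\eps w_1(x)$. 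Since $g$ is $C^{1,1}$ and affine for large $t$, for $\delta_0$ small enough $g_\delta$ is a bi-Lipschitz increasing bijection of $[0,+\infty)$, $g''$ is bounded with compact support, and a change of variable $s=g_\delta(t)$ together with \eqref{basicenergy} shows $u^\delta\in\mathcal U_\eps^0$.

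Next I differentiate each piece of $\mathcal J_\eps(u^\delta)$ at $\delta=0$. For the potential part I change variables $s=g_\delta(t)$ in
\[
\int_0^\infty e^{-t}\,W(u_\eps(g_\delta(t),\cdot))\,dt = \int_0^\infty \frac{e^{-g_\delta^{-1}(s)}}{g_\delta'(g_\delta^{-1}(s))}\,W(u_\eps(s,\cdot))\,ds,
\]
and exploit $\tfrac{d}{d\delta}\big|_{0} g_\delta^{-1}(s)=g(s)$ and $\tfrac{d}{d\delta}\big|_{0}[1/g_\delta'(g_\delta^{-1}(s))]=g'(s)$ (both deduced from $g_\delta(g_\delta^{-1}(s))=s$) to obtain the contribution $\int_0^\infty e^{-s}(g'(s)-g(s))W(u_\eps(s,\cdot))\,ds$, while the linear corrector contributes $\eps g'(0)\int_0^\infty e^{-t}t\,(DW(u_\eps(t,\cdot)),w_1)\,dt$. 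For the kinetic part, the chain rule yields
\[
(u^\delta)''(t,x) = (1-\delta g'(t))^{2}u_\eps''(g_\delta(t),x) - \delta g''(t)u_\eps'(g_\delta(t),x)
\]
(the linear corrector vanishes under two time derivatives). Squaring, changing variables $s=g_\delta(t)$ in $\frac{1}{2\eps^2}\int_0^\infty e^{-t}\|(u^\delta)''\|^2_{L^2}dt$ and differentiating at $\delta=0$ gives
\[
-\int_0^\infty e^{-s}\bigl[(g(s)+3g'(s))D_\eps(s) + g''(s)K_\eps'(s)\bigr]\,ds,
\]
where $K_\eps'(s)=\eps^{-2}\langle u_\eps''(s,\cdot),u_\eps'(s,\cdot)\rangle_{L^2(\R^N)}$.

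Summing the two contributions and rewriting with $L_\eps=D_\eps+W(u_\eps)$ through the algebraic identity $(g'-g)L_\eps-4g'D_\eps=-(g+3g')D_\eps+(g'-g)W(u_\eps)$ reproduces exactly the left-hand side of \eqref{deltaderivative}. On the other hand, Lemma \ref{longlemma} computes $\tfrac{d}{d\delta}\big|_{0}\Phi_\eps(u^\delta)$ as the negative of the right-hand side of \eqref{deltaderivative}. Since $u_\eps$ minimizes $\mathcal J_\eps$ on $\mathcal U_\eps^0$ and $\delta\mapsto \mathcal J_\eps(u^\delta)$ is differentiable at $0$, the total derivative vanishes, which is precisely \eqref{deltaderivative}.

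The main obstacle lies in the kinetic term, where one must justify interchanging derivative and integral, as well as the change of variables, uniformly in $\delta$. This is handled thanks to $g\in C^{1,1}$ being affine for large $t$, so that $g_\delta'$ is uniformly bounded away from $0$ and above, $g''$ is bounded with compact support, and the weight $e^{-g_\delta^{-1}(s)}$ is comparable to $e^{-s}$ (arguing as in \eqref{basicfinite}); the needed dominating functions then follow from \eqref{basicenergy} together with $u_\eps\in H^2_{loc}([0,+\infty);L^2(\R^N))$.
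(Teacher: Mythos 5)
Your proposal is correct and follows exactly the same strategy as the paper: perturb the minimizer by $u^\delta(t,x)=u_\eps(g_\delta(t),x)+\delta\eps t\,g'(0)w_1(x)$, compute the first variation of the kinetic and potential parts via chain rule and the change of variables $s=g_\delta(t)$ (the details that the paper simply cites from \cite[Prop.\ 4.4, Cor.\ 4.5]{ST2}), and invoke Lemma \ref{longlemma} for the $\Phi_\eps$ term, then set the total derivative to zero by minimality. All the computations check out, including the chain-rule formula for $(u^\delta)''$, the identities $\tfrac{d}{d\delta}\big|_0 g_\delta^{-1}(s)=g(s)$ and $\tfrac{d}{d\delta}\big|_0[1/g_\delta'(g_\delta^{-1}(s))]=g'(s)$, and the algebraic rearrangement $(g'-g)L_\eps-4g'D_\eps=-(g+3g')D_\eps+(g'-g)W(u_\eps)$. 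One caveat worth noting: the assertion that $e^{-g_\delta^{-1}(s)}$ is ``comparable to $e^{-s}$'' is genuinely one-sided when $g$ grows linearly at infinity — for $\delta<0$ the weight decays more slowly than $e^{-s}$ — so the admissibility of $u^\delta$ for negative $\delta$ is the delicate point of the argument; both the paper (which calls it ``clear'') and your proposal defer this to \cite{ST2}, so this is not a gap you introduced, but the justification as written via \eqref{basicfinite} does not directly apply to the $\|u_\eps''\|^2$ term, which lacks the extra Laplace-transformability that $f$ enjoys.
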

\begin{proof} The proof for $F\equiv 0$ is given in \cite[Proposition 4.4, Corollary 4.5]{ST2}. It is obtained by considering the perturbation $u_{\delta,\eps}$ of  $u_\eps$, defined as in Lemma \ref{longlemma} by $u_{\eps,\delta}(t,x):=u_\eps(g_\delta(t),x)+t\delta\eps g'(0)w_1$ for $\delta\in(-\delta_0,\delta_0)$ and suitably small $\delta_0$.  Indeed, it is shown in \cite{ST2} that in the case $F\equiv 0$ the derivative of $\delta\mapsto J(u_{\eps,\delta})$ at $\delta=0$ is given by the left hand side of \eqref{deltaderivative}, all the terms in the integral therein being indeed integrable on $(0,+\infty)$, so that \eqref{deltaderivative}  follows by the minimality of $u_\eps$ since it is clear that $u_{\eps,\delta}$ is an admissible competitor.
The more general result including a nonnull $F$ is therefore deduced by invoking
 Lemma \ref{longlemma}.
\end{proof}

\begin{lemma} For every $\eps\in(0,\eps_F)$, let  $u_\eps$ be a solution to problem \eqref{min}.
There exists a constant $\bar B$, not depending on $\eps$ but  only depending on $w_0,w_1$ and $F$,  such that 
\begin{equation}\label{A2W0}
 \int_0^{+\infty}se^{-s}\,W(u_\eps(s,\cdot))\,ds
\le \bar B.
\end{equation}
\MMM
In the limit as $\eps\to 0$ we also have the estimate
\beeq\label{specific}
\limsup_{\eps\to 0}  \int_0^{+\infty}se^{-s}\,W(u_\eps(s,\cdot))\,ds\le W(w_0).
\eneq
\KKK
Moreover, by using the same notation of {\rm Lemma \ref{borrow}}, for a.e. $\tau>0$ there holds
\begin{equation}\label{dirac}\begin{aligned}
&\int_\tau^{+\infty}e^{-(t-\tau)}L_\eps(t)\,dt-\int_{\tau}^{+\infty}(t-\tau)e^{-(t-\tau)}L_\eps(t)\,dt-4\int_\tau^{+\infty}e^{-(t-\tau)}D_\eps(t)\,dt\\&\qquad=K_\eps'(\tau)-\int_{\tau}^{+\infty}\int_{\mathbb R^N}(t-\tau)e^{-(t-\tau)}\,\frac{\partial F}{\partial v}(\eps t,x, u_\eps(t,x))\,u'_\eps(t,x)\,dx\,dt.
\end{aligned}
\end{equation}
\end{lemma}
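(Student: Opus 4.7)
The plan is to derive all three claims from identity \eqref{deltaderivative} in Lemma \ref{borrow} by specializing the test function $g$. For \eqref{A2W0} I take $g(t)=t$, which is $C^{1,1}$, nonnegative, vanishes at $0$, and is globally affine. Since $g''\equiv 0$, the $K_\eps'$ term in \eqref{deltaderivative} disappears; writing $L_\eps = D_\eps + W(u_\eps)$ and rearranging yields
\[
\int_0^{+\infty} te^{-t}\bigl(D_\eps(t)+W(u_\eps(t,\cdot))\bigr)\,dt = \int_0^{+\infty} e^{-t}\bigl(W(u_\eps)-3D_\eps\bigr)\,dt + \eps\!\int_0^{+\infty} te^{-t}(DW(u_\eps(t,\cdot)),w_1)\,dt - R_\eps,
\]
where $R_\eps$ denotes the right-hand side of \eqref{deltaderivative}. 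The first integral on the right is bounded by $\bar C$ via \eqref{basicenergy}; the second by Cauchy--Schwarz with weights $t^\alpha e^{-t}$ in terms of $\|w_1\|_H$, $\bar C$ and $r$; and $|R_\eps|$ is dominated by $\int_0^{+\infty} te^{-t}\|f(\eps t,\cdot)\|_{L^2}(\|u_\eps'\|_{L^2}+\eps\|w_1\|_{L^2})\,dt$, which is uniformly finite via Lemma \ref{quicklemma} and \eqref{'0}. Dropping the nonnegative term $\int te^{-t}D_\eps$ from the left produces \eqref{A2W0} with an explicit $\bar B$.

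To obtain \eqref{specific} I take $\limsup_{\eps\to 0}$ of the same identity. By Lemma \ref{newlemma}, $\limsup\int e^{-t}W(u_\eps)\le W(w_0)$, while $-3\int e^{-t}D_\eps\le 0$ and the $\eps$-weighted $(DW,w_1)$ integral is $o(1)$; it remains to show $R_\eps\to 0$. The part with $\eps w_1$ is trivially $O(\eps)$. For the part $\int_0^{+\infty}\!\int_{\R^N}te^{-t}G(\eps t,x,u_\eps)u_\eps'\,dx\,dt$, I use the representation $u_\eps'(t,x)-\eps w_1(x)=\int_0^t u_\eps''(s,x)\,ds$ together with
\[
\|u_\eps'(t,\cdot)-\eps w_1\|_{L^2}\le \sqrt{2\bar C}\,\eps\,e^{t/2}\sqrt{t},
\]
which follows from Cauchy--Schwarz and \eqref{basicenergy}. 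After splitting off the $\eps w_1$ contribution, the residual is controlled by $\eps\sqrt{2\bar C}\int_0^{+\infty} t^{3/2}e^{-t/2}\|f(\eps t,\cdot)\|_{L^2}\,dt$; a further Cauchy--Schwarz combined with a change of variables exploiting \eqref{laplace} (which yields a factor of order $1/\eps$ in the resulting weighted integral) leaves an $O(\sqrt{\eps})$ term that vanishes as $\eps\to 0$.

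For \eqref{dirac} I choose $g_\rho(t):=\int_0^t \chi_\rho(s)\,ds$, where $\chi_\rho$ is a smooth nondecreasing $[0,1]$-valued function equal to $0$ on $[0,\tau-\rho]$ and to $1$ on $[\tau+\rho,+\infty)$. Then $g_\rho\in C^{1,1}$ with $g_\rho(0)=g_\rho'(0)=0$ for small enough $\rho$, and $g_\rho$ is affine on $[\tau+\rho,+\infty)$, so Lemma \ref{borrow} applies and the $w_1$-terms drop out. As $\rho\to 0$ we have $g_\rho(t)\to (t-\tau)_+$, $\chi_\rho\to \un_{(\tau,+\infty)}$ pointwise, and $\chi_\rho'\to\delta_\tau$ in the sense of distributions. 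The integrals involving $\chi_\rho L_\eps$, $g_\rho L_\eps$, $\chi_\rho D_\eps$ and $g_\rho G(\eps t,x,u_\eps) u_\eps'$ pass to the limit by dominated convergence, the uniform majorants being provided by \eqref{basicenergy}, by the bound $\int te^{-t}L_\eps<+\infty$ obtained as a byproduct of the proof of \eqref{A2W0}, and by the $L^1$-estimate on $te^{-t}\|f(\eps t,\cdot)\|_{L^2}\|u_\eps'\|_{L^2}$. The delicate term is $\int_0^{+\infty} e^{-t}\chi_\rho'(t) K_\eps'(t)\,dt$: since $\chi_\rho'$ is a unit-mass smooth approximation of $\delta_\tau$ supported in $[\tau-\rho,\tau+\rho]$ and $K_\eps'(t)=\eps^{-2}(u_\eps'(t,\cdot),u_\eps''(t,\cdot))_{L^2}$ belongs to $L^1_{loc}(0,+\infty)$, the Lebesgue differentiation theorem gives convergence to $e^{-\tau}K_\eps'(\tau)$ at every Lebesgue point of $e^{-t}K_\eps'(t)$, hence for a.e. $\tau>0$. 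Multiplying the resulting identity by $e^\tau$ and rearranging yields \eqref{dirac}.

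The main obstacle is the analysis of $R_\eps\to 0$ needed for \eqref{specific}: naive estimates only yield $\int e^{-t}\|u_\eps'\|^2_{L^2}\,dt\le 2\|w_1\|^2+O(\eps^2)$, which is bounded but not infinitesimal, so the argument must genuinely exploit both the $\eps$-scaling $u_\eps'=\eps w_\eps'(\eps\cdot)$ and the Laplace transformability \eqref{laplace} of $\|f(\cdot,\cdot)\|^2_{L^2}$ in order to absorb the $1/\eps$ factor produced by the change of variables in the weighted norm of $f$.
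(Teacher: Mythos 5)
Your proof is correct, and for \eqref{A2W0} and \eqref{dirac} it follows the paper's own route: specialize \eqref{deltaderivative} to $g(t)=t$, respectively to $C^{1,1}$ approximations of $(t-\tau)_+$, and pass to the limit; your Lebesgue-differentiation treatment of the $g_\rho''K_\eps'$ term is a valid reconstruction of the argument in [ST2, Corollary 4.7] that the paper cites for that step. For \eqref{specific}, however, your route is genuinely different and more circuitous than the paper's, and the ``main obstacle'' you identify at the end rests on a misreading. Since $u_\eps'(0,\cdot)=\eps w_1$, applying \eqref{stimacon0} directly to $u_\eps'$ gives the sharp estimate
\[
\int_0^{+\infty}e^{-t}\|u_\eps'(t,\cdot)\|^2_{L^2(\R^N)}\,dt\le 2\eps^2\|w_1\|^2_{L^2(\R^N)}+4\int_0^{+\infty}e^{-t}\|u_\eps''(t,\cdot)\|^2_{L^2(\R^N)}\,dt\le 2\eps^2\|w_1\|^2_{L^2(\R^N)}+8\eps^2\bar C,
\]
which is $O(\eps^2)$, not $2\|w_1\|^2+O(\eps^2)$ as you claimed: the constant $2\|w_1\|^2$ appearing in the stated \eqref{'0} is merely a looser version valid because $\eps<1$. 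With the $O(\eps^2)$ bound in hand, the paper concludes in one line, since a single Cauchy--Schwarz with weight $se^{-s/2}$ gives
$\int_0^{+\infty}\int_{\R^N}se^{-s}f(\eps s,x)|u_\eps'(s,x)|\,dx\,ds\le\sqrt{K_F^*(2)}\,O(\eps)\to 0$,
and the two remaining terms of the $g(t)=t$ identity are visibly $O(\eps)$. Your alternative — splitting $u_\eps'=\eps w_1+\int_0^t u_\eps''$, proving the pointwise-in-time bound $\|u_\eps'(t,\cdot)-\eps w_1\|_{L^2(\R^N)}\le\sqrt{2\bar C}\,\eps\,e^{t/2}\sqrt t$ from \eqref{basicenergy}, then changing variables $t\mapsto t/\eps$ in the $f$-integral and using \eqref{laplace} — does work and yields an $O(\sqrt\eps)$ error, so the conclusion still follows; but it buys nothing over the direct estimate, and in both arguments the decisive mechanism is precisely the ingredient you presented as missing, namely that the time-rescaled minimizer satisfies $u_\eps'(0,\cdot)=\eps w_1$, so that $u_\eps'$ already carries a factor of $\eps$.
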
 
\begin{proof}
By taking $g(t)=t$ in Lemma \ref{longlemma}
 we get
\begin{equation*}\begin{aligned}
&\int_0^{+\infty}te^{-t}L_\eps(t)\,dt-\int_{0}^{+\infty}e^{-t}L_\eps(t)\,dt+4\int_{0}^{+\infty}e^{-t}D_\eps(t)\,dt\\&\qquad=\int_0^{+\infty}\int_{\mathbb R^N}se^{-s}\frac{\partial F}{\partial v}(\eps s,x,u_\eps(s,x))\,u_\eps'(s,x)\,dx\,ds
+\eps\int_0^{+\infty}se^{-s}( DW (u_\eps(s,\cdot)), w_1)\,ds\\&\qquad\quad-\eps\int_{0}^{+\infty}\int_{\mathbb R^N}se^{-s}\frac{\partial F}{\partial v}(\eps s,x, u_\eps(s,x))w_1(x)\,dx\,ds,
\end{aligned}\end{equation*}
therefore, since 
$
\int_0^{+\infty}se^{-s}W(u_\eps(s,\cdot))\,ds\le \int_{0}^{+\infty}se^{-s}L_\eps(s)\,ds,
$
 from  \eqref{laplace} we obtain 
\beeq\label{manyterms}\begin{aligned}
&
\int_0^{+\infty}se^{-s}W(u_\eps(s,\cdot))\,ds\le
 \int_{0}^{+\infty}e^{-s}L_\eps(s)\,ds
 +\int_0^{+\infty}\int_{\mathbb R^N}se^{-s}f(\eps s,x)\,|u_\eps'(s,x)|\,dx\,ds
\\&\qquad+\eps\int_0^{+\infty}se^{-s}|( DW(u_\eps(s,\cdot)), w_1)|\,dt+\eps\int_{0}^{+\infty}\int_{\mathbb R^N}se^{-s}f(\eps s,x)\,|w_1(x)|\,dx\,ds,
\end{aligned}\end{equation}
and we may estimate the terms in the right hand side uniformly in $\eps$. Indeed, $ \int_{0}^{+\infty}e^{-s}L_\eps(s)\,ds$ is the left hand side of \eqref{basicenergy} and therefore it gets estimated by $\bar C$ as therein.  We treat $\int_0^{+\infty}\int_{\mathbb R^N}se^{-s}f(\eps s,x)\,|u_\eps'(s,x)|\,dx\,ds$ by means of \eqref{adr} with $g(t)=t$, and the  integral in the right hand side of \eqref{adr} gets estimated by a constant 
 depending only on $w_0,w_1$ and $F$ by means of \eqref{'0} and \eqref{basicenergy}. Similarly we use \eqref{adp} for estimating $\int_{0}^{+\infty}\int_{\mathbb R^N}se^{-s}f(\eps s,x)|w_1(x)|\,dx\,ds$.  Moreover, by Young inequality
\[\begin{aligned}
&\int_0^{+\infty}se^{-s}|(DW(u_\eps(s,\cdot)), w_1)|\,ds\\&\;\;\;\le
\int_0^{+\infty}\int_{\R^N}se^{-2}[|\nabla u_\eps(s,x)||\nabla w_1(x)|+|u_\eps(s,x)|^{r-1}|w_1|]\,dx\,ds\\&\;\;\;\le
\int_0^{+\infty}\int_{\R^N}e^{-s}\left(\frac{s^2}2 |\nabla w_1(x)|^2+\frac{s^r}r|w_1(x)|^r+\frac{1}{2}|\nabla u_\eps(s,x)|^2+\frac{r-1}{r}|u_\eps(s,x)|^r\right)\,dx\,ds\\
&\;\;\;\le \|\nabla w_1\|_{L^2(\R^N)}^2+\frac{\Gamma(r+1)}r\|w_1\|^r_{L^r(\R^N)}+r\int_0^{+\infty}e^{-s}W(u_\eps(s,\cdot))\,ds
\end{aligned}\]
and we conclude by estimating the right hand side by means of \eqref{basicenergy}, thus getting \eqref{A2W0}. \MMM By the above estimate and by Lemma \ref{quicklemma}, it is also clear that the last two terms of \eqref{manyterms} are vanishing as $\eps\to0$. The second term in the right hand side of \eqref{manyterms} vanishes as well as $\eps\to 0$, because by applying Cauchy-Schwarz inequality along with  \eqref{stimacon0},  \eqref{basicenergy} and Lemma \ref{quicklemma} we see that
\[\begin{aligned}
&\int_0^{+\infty}\int_{\mathbb R^N}se^{-s}f(\eps s,x)\,|u_\eps'(s,x)|\,dx\,ds\le \sqrt{K_F^*(2)}\left(
\int_0^{+\infty}e^{-s}\|u_\eps'(s,\cdot)\|^2_{L^2(\R^N)}\right)^\frac12\\&\qquad\le
\sqrt{K_F^*(2)}\left(2\eps^2\|w_1\|^2_{L^2(\R^N)}+4
\int_0^{+\infty}e^{-s}\|u_\eps''(s,\cdot)\|^2_{L^2(\R^N)}\right)^\frac12\\&\qquad\le \sqrt{K_F^*(2)}\left(2\eps^2\|w_1\|^2_{L^2(\R^N)}+8\eps^2\bar C\right)^{\frac12}.
\end{aligned}\]
 Therefore, by taking the limit as $\eps\to0$ in \eqref{manyterms}, by recalling the definition of $L_\eps$ from  Lemma \ref{borrow} and by applying Lemma \ref{newlemma}, we deduce  \eqref{specific}.  \KKK
 

Finally \eqref{dirac} is obtained from \eqref{deltaderivative} by means of the argument in \cite[Corollary 4.7]{ST2}, where such a  formula is proved in the case $F\equiv 0$. Indeed
 choosing $g=g_n$ in Lemma \ref{longlemma}, where $g_n$ is the sequence of $C^{1,1}$ functions from \cite[Corollary 4.7]{ST2}, converging pointwise to $g(t)=(t-\tau)_+$ as $n\to+\infty$, in the case $F\equiv 0$ one obtains \eqref{dirac} as shown therein. If $F$ is nonnull, the terms involving $F$ in \eqref{deltaderivative} pass to the limit as $n\to+\infty$ by dominated convergence.
\end{proof}

Now we can give the proof of Lemma \ref{appendixlemma}. The proof is
 based on the arguments in \cite[Proposition 5.4]{TT1}, along with Lemma \ref{Lebesgue}. 
 
 \begin{proofad2} 
We define $K_\eps$ and $D_\eps$ as in Lemma \ref{borrow}. 
A computation shows that  $$E'_\eps(t)=K_\eps'(t)+\int_t^{+\infty}(s-t)e^{-(s-t)}W(u_\eps(s,\cdot))\,ds-\int_t^{+\infty}e^{-(s-t)} W(u_\eps(s,\cdot))\,ds$$ for a.e. $t>0$, which, in combination with 
\eqref{dirac} and \eqref{laplace}, yields
\[\begin{aligned}
E_\eps'(t)&=-3\int_t^{+\infty}e^{-(s-t)} D_\eps(s)\,ds-\int_t^{+\infty}(s-t)e^{-(s-t)} D_\eps(s)\,ds\\&\qquad+\int_{t}^{+\infty}\int_{\R^N}e^{-(s-t)}(s-t)\,\frac{\partial F}{\partial v}(\eps s,x, u_\eps(s,x))\,u_\eps'(s,x)\,dx\,ds,\\&\le
-3\int_t^{+\infty}e^{-(s-t)} D_\eps(s)\,ds-\int_t^{+\infty}(s-t)e^{-(s-t)} D_\eps(s)\,ds\\&\qquad+\int_{t}^{+\infty}\int_{\R^N}e^{-(s-t)}(s-t)\,f(\eps s,x)\,|u_\eps'(s,x)|\,dx\,ds,
\end{aligned}\]
 thus  by Cauchy-Schwarz inequality we get
 \[\begin{aligned}
E_\eps'(t)&\le -3\int_t^{+\infty}e^{-(s-t)} D_\eps(s)\,ds-\int_t^{+\infty}(s-t)e^{-(s-t)} D_\eps(s)\,ds\\&\;+\left(\int_t^{+\infty}(s-t)e^{-(s-t)}\|f(\eps s,\cdot)\|_{L^2(\mathbb R^N)}^2\,ds\right)^\frac12\left(\int_t^{+\infty}(s-t)e^{-(s-t)}\|u_\eps'(s,\cdot)\|_{L^2(\mathbb R^N)}^2\,ds\right)^{\frac12}
\end{aligned}\]
for a.e. $t>0$. By means  of a suitable version of the Gronwall lemma, it is shown in the proof of \cite[Proposition 5.4]{TT1} that the latter estimate implies that for every $t\ge0$ \MMM and every $\beta>1$ 
\begin{equation*}
\sqrt{E_\eps(t)}\le\sqrt{E_\eps(0)}+\sqrt\eps (\tilde C_\beta+\sqrt {t\beta/2})\left(\eps\int_0^t\left(\int_s^{+\infty}(\tau-s)e^{-(\tau-s)}\|f(\eps \tau,\cdot)\|_{L^2(\mathbb R^N)}^2\,d\tau\right)\,ds\right)^\frac12,
\end{equation*}
where $\tilde C_\beta$ is a suitable positive constant, only depending on $\beta$, such that $\tilde C_2=\sqrt{2\sqrt2/(\sqrt2-1)}$.
 By recalling the definition of $Q_\eps$ from \eqref{Theta}, the latter estimate reads
\begin{equation}\label{tt1}\sqrt{E_\eps(t)}\le \sqrt{E_\eps(0)}+\sqrt{\eps}(\tilde C_\beta+\sqrt{t\beta/2})\left(\eps\int_0^t Q_\eps(\eps s)\,ds\right)^{\frac12}.
\end{equation}
 \KKK Since $E_\eps(0)=K_\eps(0)+\int_0^{+\infty}se^{-s} W(u_\eps(s,\cdot))\,ds$, since $K_\eps(t)=\tfrac1{2\eps^2}\|u_\eps'(t,\cdot)\|^2_{L^2(\mathbb R^N)}$ and $u_\eps'(0,\cdot)=\eps w_1$, we see that $K_\eps(0)=\frac12\|w_1\|^2_{L^2(\R^N)}$ and we obtain  $E_\eps(0)\le\tfrac12\|w_1\|^2_{L^2(\mathbb R^N)}+\bar B=:Q$  where $\bar B$ is the constant appearing in \eqref{A2W0}. \MMM
Therefore from \eqref{tt1} \MMM with $\beta=2$ \KKK we obtain \eqref{tt}, since $\eps<\eps_F<1$. 
 \KKK
 \MMM Moreover, still by \eqref{tt1} we have
\[
\sqrt{E_\eps(t/\eps)}\le \sqrt{\frac{1}{2}\|w_1\|^2_{L^2(\R^N)}+\int_0^{+\infty} se^{-s}W(u_\eps(s,\cdot))\,ds}+\left(\sqrt\eps\tilde C_\beta+\sqrt{t\beta/2}\right)\left(\int_0^t Q_\eps(s)\,ds\right)^\frac12
\]
for every $t\ge 0$ and for every $\beta>1$, and we recall that by Lemma \ref{Lebesgue} there holds
\[
\lim_{\eps\to 0} \int_0^tQ_\eps(s)\,ds=\int_0^t \|f(s,\cdot)\|^2_{L^2(\R^N)}\,ds,
\]
so that by exploiting \eqref{specific} we deduce
\[
\limsup_{\eps\to0}\sqrt{E_\eps(t/\eps)}\le \left(\frac12\|w_1\|_{L^2(\R^N)}^{2}+W(w_0)\right)^{\frac12}+\sqrt{t\beta/2}\,\left(\int_0^t\|f(s,\cdot)\|^2_{L^2(\R^N)}\,ds\right)^{\frac12}.
\]
By taking the limit as $\beta\to 1$ we get \eqref{newinappendixlemma}.
 \KKK
\end{proofad2}

\subsection*{Acknowledgements} 
The authors acknowledge support from the MIUR-PRIN  project  No 2017TEXA3H.  EM is supported by the Istituto Nazionale di Alta Matematica (INDAM) project CUP$_-$E55F22000270001.

%

\end{document}